\def\authors{George Lowther}
\def\runauthor{\authors}
\def\head{Properties of Expectations of Functions of Martingale Diffusions}
\def\runhead{Expectations of Diffusions}
\def\keywords{diffusion, martingale, strong Markov property.}
\def\classifications{60J60,60J25, 60G44.}
\def\@evenhead{\thepage\hfill{\small\MakeUppercase{\runauthor}}\hfill}
\def\@oddhead{\hfill{\small\MakeUppercase{\runhead}}\hfill\thepage}
\def\blfootnote{\xdef\@thefnmark{}\@footnotetext}
\newcommand{\halfplane}{\reals_+\times\reals}
\newcommand{\reals}{\mathbb{R}} % reals
\newcommand{\rats}{\mathbb{Q}} % rationals
\newcommand{\E}[1]{\mathbb{E}\left[#1\right]} % expected value
\newcommand{\setsF}{\mathcal{F}} % collection of sets
\newcommand{\pd}[1]{_{\!,_{#1}}}
\newcommand{\nat}{\mathbb{N}}
\newcommand{\Prob}[1]{\mathbb{P}\left(#1\right)} % probability
\newcommand{\acd}{ACD}
\newcommand{\support}[1]{{\rm Supp}(#1)}
\newcommand{\msupport}[1]{{\rm MSupp}(#1)}
\newcommand{\slbrack}{{[\hspace*{-0.15em}[}}  % ]] for stochastic interval
\newcommand{\srbrack}{{]\hspace*{-0.15em}]}}  % [[ for stochastic interval
\newcommand{\PP}{\mathbb{P}}
\newcommand{\filtrationF}{(\Omega,\setsF, (\setsF_t)_{t\in\reals_+},\PP)}
\newenvironment{eqnalign*}{\setlength\arraycolsep{2pt}\begin{eqnarray*}}{\end{eqnarray*}}
\newtheorem{definition}{Definition}[section]
\newtheorem{theorem}[definition]{Theorem}
\newtheorem{lemma}[definition]{Lemma}
\newtheorem{corollary}[definition]{Corollary}
\begin{document}

\title{\head}
\blfootnote{\emph{Key Words}: \keywords}
\blfootnote{\emph{AMS 2000 Classification}: \classifications}
\author{\authors}
\date{}
\maketitle
\thispagestyle{empty}

\begin{abstract}
Given a real valued and time-inhomogeneous martingale diffusion $X$, we investigate the properties of functions defined by the conditional expectation $f(t,X_t)=\E{g(X_T)|\setsF_t}$.
We show that whenever $g$ is monotonic or Lipschitz continuous then $f(t,x)$ will also be monotonic or Lipschitz continuous in $x$. If $g$ is convex then $f(t,x)$ will be convex in $x$ and decreasing in $t$.
We also define the \emph{marginal support} of a process and show that it almost surely contains the paths of the process.
Although $f$ need not be jointly continuous, we show that it will be continuous on the marginal support of $X$. We prove these results for a generalization of diffusion processes that we call `almost-continuous diffusions', and includes all continuous and strong Markov processes.
\end{abstract}

\section{Introduction}
\label{sec:intro}

Suppose that we have a real valued Markov process $X$, and any times $T>t\ge 0$. Then, the Markov property says that for any measurable function $g:\reals\rightarrow\reals$ such that $g(X_T)$ is integrable, there is a function $f$ satisfying
\begin{equation}\label{eqn:cond exp}
f(t,X_t) = \E{g(X_T)|\setsF_t}
\end{equation}
for every $t\le T$. The aim of this paper is to show that various properties of $f$ follow from the properties of $g$. When $X$ is a time inhomogeneous diffusion (i.e., strong Markov and continuous) then it is shown that $f(t,x)$ is increasing in $x$ whenever $g$ is increasing. If $X$ is also a martingale we show that $f(t,x)$ is Lipschitz continuous in $x$ whenever $g(x)$ is Lipschitz continuous.
If, furthermore, $g$ is convex then $f(t,x)$ will be both convex in $x$ and decreasing in $t$.
The proof of these results makes use of the coupling methods used by \citep{Hobson}.

We also look at the continuity of $f$. In general, it may not be possible to choose $f(t,x)$ to be  jointly continuous everywhere. However, we define the \emph{marginal support} of a process, which is a Borel subset of $\halfplane$ defined in terms of the supports of the marginal distributions, and show that it almost-surely contains the paths of the process and that $f$ is jointly continuous on this set.

All of these results are proven  for a generalization of diffusion processes that we refer to as \emph{almost-continuous} diffusions. Such processes are defined by the strong Markov property and a pathwise almost-continuity condition, and we also characterize the almost-continuous martingales in terms of the jumps of the process and its marginal support.

The situation where $g$ is convex and monotonic occurs frequently in mathematical finance.
In this case, $g(X_T)=(X_T-k)_+$ is the payoff of a call option with strike $k$ and maturity $T$, and $f(t,X_t)$ is its value at time $t$ as a function of the underlying asset price $X$. It is well known that under many situations the function $f(t,x)$ would then be convex and increasing in $x$ and decreasing in $t$. For example, this is a property of the Black-Scholes pricing formula, or indeed any model in which $X$ is the exponential of a process with independent increments (see, for example, \citep{Merton73}).

Another case that has been studied by several authors is when $X$ is a diffusion described by an SDE of the form
\begin{equation}\label{eqn:SDE}
dX_t=\sigma(t,X_t)\,dW_t
\end{equation}
where $W$ is a Brownian motion. This situation has been investigated by \citep{Bergman,Hobson,Karoui}. In \citep{Bergman} they differentiate the following PDE for $f$,
\begin{equation*}\label{eqn:PDE}
\frac{\partial f}{\partial t}(t,x) + \frac{1}{2}\sigma^2(t,x)\frac{\partial^2 f}{\partial x^2}(t,x) = 0.
\end{equation*}
Then it is shown that the derivative of $f$ is increasing with respect to $x$. In \citep{Karoui} they use a method of stochastic flows. Both these approaches require $\sigma$ to be continuously differentiable. In \citep{Janson} the notion of volatility time is used to show that $f(t,x)$ is decreasing in $t$, and then deduce convexity in $x$ from the PDE above.
In \citep{Hobson} a simple coupling method is used to prove convexity in $x$, which doesn't place any restrictions on $\sigma$, other than those necessary for the SDE (\ref{eqn:SDE}) to have unique solutions that are martingales.
We employ these coupling techniques in the current paper in order to prove the monotonicity, Lipschitz continuity and convexity results for $f$. However, rather than assuming that $X$ is the unique solution of an SDE such as (\ref{eqn:SDE}) we instead work with the more general definition of a diffusion as a continuous and  strong Markov process, and further generalize to the following class of processes.
Except where explicitly stated otherwise, we assume that all stochastic processes are defined with respect to a filtered probability space $\filtrationF$.

\begin{definition}\label{defn:acd}
Let $X$ be a real valued stochastic process. Then,
\begin{enumerate}
\item $X$ is \emph{strong Markov} if for every bounded, measurable $g:\reals\rightarrow\reals$ and every $t\in\reals_+$ there exists a measurable $f:\halfplane\rightarrow\reals$ such that
\begin{displaymath}
f(\tau,X_\tau) = \E{g(X_{\tau+t})\mid \setsF_\tau}
\end{displaymath}
for every finite stopping time $\tau$.

\item $X$ is almost-continuous if it is cadlag, continuous in probability and given any two independent, identically distributed cadlag processes $Y,Z$ with the same distribution as $X$ and for every $s<t\in\reals_+$ we have
\begin{displaymath}
\Prob{Y_s<Z_s, Y_t>Z_t{\rm\ and\ }Y_u\not= Z_u\textrm{ for every }u\in(s,t)}=0
\end{displaymath}

\item $X$ is an \emph{almost-continuous diffusion} if it is strong Markov and almost-continuous.
\end{enumerate}
\end{definition}

The second condition above is equivalent to saying that $Y-Z$ cannot change sign without hitting zero and, by the intermediate value theorem, is clearly true for continuous processes. Examples of almost-continuous diffusions that are not continuous include, for example, Poisson and compensated Poisson processes.
Also, almost-continuous diffusions arise as limits of continuous diffusions under the topology of convergence of finite-dimensional distributions (as we shall show in a future paper).
We will often abbreviate `almost-continuous diffusion' to \acd.

To further see what the almost-continuous property means, we can look at an alternative way of defining it. First, given any real valued random variable $X$ then its support, $\support{X}$, is defined to be the smallest closed subset of $\reals$ such that $\Prob{X\in\support{X}}=1$. We define the \emph{marginal support} of a process $X$ in terms of the supports of its marginal distributions.

\begin{definition}\label{defn:marginal support}
Let $X$ be a real valued stochastic process. Then, its \emph{marginal support} is
\begin{displaymath}
\msupport{X} = \left\{(t,x)\in\halfplane: x \in\support{X_t}\right\}.
\end{displaymath}
\end{definition}

If $X$ is right-continuous in probability then, as we shall show, its marginal support will be a Borel measurable subset of $\halfplane$. We now state the alternative definition of almost-continuous martingales.

\begin{lemma}\label{lemma:mgale AC iff never jumps past msupp}
Let $X$ be a real valued cadlag martingale which is continuous in probability. Then it is almost-continuous if and only if
\begin{equation}
\label{eqn:lemma:mgale AC iff never jumps past msupp:1}\left\{(t, x)\in\halfplane : X_{t-} < x < X_t\textrm{ or }X_t < x < X_{t-}\right\}
\end{equation}
is almost surely disjoint from the marginal support of $X$.
\end{lemma}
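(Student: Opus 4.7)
The plan is to work throughout with two i.i.d.\ copies $Y$ and $Z$ of $X$. Observe that $\msupport{Y}=\msupport{Z}=\msupport{X}$, and that conditioning on $Y$ and applying continuity in probability of $Z$ at each of the countably many jump times of $Y$ yields $\Prob{Y\text{ and }Z\text{ share a common jump time}}=0$. Both directions then reduce to identifying, on a positive-probability event, a pair $(T,x)$ that lies simultaneously in $\msupport{X}$ and strictly inside the jump interval of $Y$ at time $T$.

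For the direction disjointness $\Rightarrow$ almost-continuous, fix $s<t$ and assume for contradiction that $E=\{Y_s<Z_s,\,Y_t>Z_t,\,Y_u\neq Z_u\text{ for all }u\in(s,t)\}$ has positive probability. Define $T=\inf\{u\in[s,t]:Y_u\geq Z_u\}$. Right-continuity and the nonzero hypothesis on $(s,t)$ give $s<T\leq t$, $(Y-Z)_{T-}\leq 0$ and $(Y-Z)_T>0$, so $Y-Z$ jumps at $T$ and, by the no-simultaneous-jumps observation, exactly one of $Y$ and $Z$ jumps there. Assuming by symmetry that it is $Y$, $Z_T=Z_{T-}$ and $Y_{T-}\leq Z_T<Y_T$; since $Z\sim X$ is independent of $(Y,T)$, $Z_T\in\support{X_T}$ almost surely on $E$. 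When $Y_{T-}<Z_T$, the point $(T,Z_T)\in\msupport{X}$ lies strictly inside the jump interval of $Y$ at $T$, contradicting the disjointness hypothesis. The boundary coincidence $Y_{T-}=Z_T$ (equivalently $Y_{T-}=Z_{T-}$) is the main obstacle: I would handle it by exploiting the martingale property of $Y-Z$ together with the constraint $(Y-Z)_u\to 0^-$ on $[s,T)$, which, combined with the symmetry between $Y$ and $Z$, should either rule out this configuration or else exhibit a strict crossing at some earlier stopping time.

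For the converse direction, suppose that with positive probability the set in $(\ref{eqn:lemma:mgale AC iff never jumps past msupp:1})$ meets $\msupport{X}$. By countable decomposition over rationals $q<r$ and $s_0<t_0$, the event $W$ that $Y$ has a jump time $T\in(s_0,t_0)$ with $Y_{T-}<q<r<Y_T$ and $(q,r)\cap\support{X_T}\neq\emptyset$ has positive probability; a measurable-selection argument then yields a random $\xi\in(q,r)\cap\support{X_T}$ on $W$. Introducing an independent copy $Z$ and conditioning on $Y$, $\Prob{|Z_T-\xi|<\varepsilon\mid Y}$ equals $\Prob{|X_T-\xi|<\varepsilon}$ evaluated at the $(T,\xi)$ determined by $Y$, which is strictly positive since $\xi\in\support{X_T}$; hence $W\cap\{|Z_T-\xi|<\varepsilon\}$ has positive probability for every $\varepsilon>0$. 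Choosing rationals $s'<T<t'$ close to $T$ and exploiting the cadlag regularity of both $Y$ and $Z$, I arrange on a positive-probability sub-event that $Y_{s'}<Z_{s'}$, $Y_{t'}>Z_{t'}$ and $Y_u\neq Z_u$ throughout $(s',t')$, contradicting almost-continuity for the rational pair $(s',t')$. The delicate step here is ensuring $Y_u\neq Z_u$ on the whole window, which requires the window to be short enough that $Y$ performs essentially a single jump inside it while $Z$ remains within distance $\varepsilon$ of $\xi$.
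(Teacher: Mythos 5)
Your converse direction (almost-continuous $\Rightarrow$ disjointness) is essentially the paper's own argument: find a positive-probability event on which $Y_{T-}<Z_T<Y_T$ for independent copies $Y,Z$, use the fact that $Y$ and $Z$ cannot jump simultaneously to get continuity of $Z$ at $T$, and then produce rational times $s'<T<t'$ witnessing a sign change of $Y-Z$ without a zero. That half is sound (the paper avoids your measurable selection of $\xi$ by working directly with rational intervals $(q,r)$ and the function $t\mapsto\Prob{q<X_t<r}$, and the correct justification for $Z_T\in\support{X_T}$ is that the whole path $(u,Z_u)$ lies in $\msupport{X}$ almost surely, not independence of $Z$ from $(Y,T)$ --- your $T$ depends on $Z$ in the other direction, and in this direction $T$ is $Y$-measurable so either justification can be made to work).

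The problem is the other direction, and specifically the case you yourself flag as ``the main obstacle'': at the crossing time $T$, the configuration $Y_{T-}=Z_{T-}=Z_T<Y_T$ (and its mirror image with $Z$ jumping down onto $Y$). Here $Z_T$ sits on the \emph{boundary} of the open jump interval $(Y_{T-},Y_T)$, so the disjointness hypothesis gives no contradiction, and indeed it cannot: one can have $\support{X_T}$ with a gap exactly covering $(Y_{T-},Y_T)$, so this scenario is genuinely consistent with the hypothesis for a non-martingale. This is not a loose end to be patched; it is the entire content of the hard half of the lemma, and the paper devotes its longest argument (Lemma \ref{lemma:mgale never crosses msupp then ac:increasing times}) to it. The resolution there is to introduce the approximating stopping times $T_n=\inf\{u\ge s:Y_u+1/n\ge Z_u\}$, prove $T_n\uparrow T$ with $T_n<T$ strictly whenever $Y_T\ne Z_T$ (which itself requires a delicate argument, using the disjointness hypothesis and the random times $R^u=\inf\{t\ge u:f(t,Z_{t-})=0\}$, to show that neither $Y$ can jump down nor $Z$ jump up at $\lim T_n$), and then apply optional stopping to the martingale $Y-Z$ between $T_n$ and $T\wedge t$: since $Y_{T_n}-Z_{T_n}\le 0$ and $Y_T-Z_T\ge 0$, the limit forces $Y_T=Z_T$. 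Your proposal gestures at ``exploiting the martingale property'' but gives no mechanism --- in particular, a naive optional stopping at $T$ itself tells you nothing because you have no pre-$T$ stopping times at which $Y-Z$ is controlled. Without the $T_n$ construction (or an equivalent), the proof of this direction is missing its essential step.
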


So a martingale is almost-continuous if it cannot jump past any points in its marginal support. The proof of this is rather complicated, and is left until Section \ref{sec:msupport}.

We now state the result that $f(t,x)$ satisfying (\ref{eqn:cond exp}) can be chosen to be increasing in $x$ whenever $g$ is increasing.

\begin{theorem}\label{thm:exp of increasing is increasing}
Let $X$ be an almost-continuous diffusion. Choose any increasing function $g:\reals\rightarrow\reals$ and $T\in\reals_+$ such that $\E{|g(X_T)|}<\infty$.

Then, for every $t\in[0,T]$ there exists an increasing function $f:\reals\rightarrow\reals\cup\{\pm\infty\}$ such that
$f(X_t)=\E{g(X_T)|\setsF_t}$.
\end{theorem}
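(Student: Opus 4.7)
The plan is a coupling argument in the style of \citep{Hobson}. Fix $t\in[0,T]$. By the strong Markov property, for each deterministic $s\in[t,T]$ there is a measurable $f(s,\cdot):\reals\rightarrow\reals$ with $f(s,X_s)=\E{g(X_T)\mid\setsF_s}$, and once we pass to a regular family of transition kernels this extends to $f(\tau,X_\tau)=\E{g(X_T)\mid\setsF_\tau}$ for every stopping time $\tau\in[t,T]$. My goal is to prove that $a<b$ in $\support{X_t}$ imply $f(t,a)\le f(t,b)$; given this, $f(t,\cdot)$ can be redefined off $\support{X_t}$ as its monotone envelope in $\reals\cup\{\pm\infty\}$, preserving the defining identity and producing an increasing function on $\reals$.

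First I would reduce to bounded $g$. Writing $g=g^+-g^-$, both $g^+$ and $-g^-$ are increasing; truncating each at height $n$ yields bounded increasing approximations, and monotone convergence combined with the fact that a pointwise limit of increasing functions is increasing lifts the bounded case to the general one. So assume $g$ bounded. On an enlarged probability space I would construct two independent copies $Y,Z$ of $X$ obtained by running its Markov transition kernels from $(t,a)$ and $(t,b)$ respectively, and set $\tau_0=\inf\{s\ge t:Y_s=Z_s\}$, $\tau=\tau_0\wedge T$, a stopping time for the joint filtration $\setsG_s=\sigma(Y_u,Z_u:u\le s)$.

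The almost-continuous property, applied to the iid pair $(Y,Z)$, forbids $Y-Z$ from changing sign without hitting zero. Since $Y_t-Z_t=a-b<0$, this forces $Y_s\le Z_s$ throughout $[t,T]$ on the event $\{\tau=T\}=\{\tau_0\ge T\}$, so in particular $g(Y_T)\le g(Z_T)$ there. On the complementary event $\{\tau<T\}$ one has $Y_\tau=Z_\tau$, so the strong Markov property at $\tau$ yields $f(\tau,Y_\tau)=f(\tau,Z_\tau)$. Combining the two cases, $f(\tau,Y_\tau)\le f(\tau,Z_\tau)$ almost surely, and taking expectations gives
\begin{displaymath}
f(t,a)=\E{g(Y_T)}=\E{f(\tau,Y_\tau)}\le\E{f(\tau,Z_\tau)}=\E{g(Z_T)}=f(t,b).
\end{displaymath}

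The hard part will be justifying the strong Markov property at the random time $\tau$ in the form $\E{g(X_T)\mid\setsF_\tau}=f(\tau,X_\tau)$, since Definition \ref{defn:acd} is phrased with a fixed time increment rather than a random one; I would handle this by dyadic approximation of $\tau$ from above, applying the fixed-increment property at each level and passing to the limit using continuity in probability of $X$ together with the cadlag martingale $s\mapsto\E{g(X_T)\mid\setsF_s}$. A secondary but routine subtlety is that almost-continuity is stated per fixed pair $(s,t)$, so to obtain the pathwise no-crossing statement on $\{\tau_0\ge T\}$ I would apply the definition over rational pairs and use right-continuity of $Y,Z$ to fill in the gaps.
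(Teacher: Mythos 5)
There is a genuine gap at the heart of your coupling. The almost-continuity hypothesis (Definition \ref{defn:acd}) is stated only for two \emph{independent copies with the same law as $X$}; the processes $Y,Z$ you construct by ``running the transition kernels from $(t,a)$ and $(t,b)$'' have neither the law of $X$ nor even the same law as each other, so you are not entitled to conclude that $Y-Z$ cannot change sign without hitting zero. Nothing in the paper's hypotheses gives you that the law of $X$ started (or conditioned) at a fixed point of the support is again almost-continuous, and this is exactly the step your inequality $f(\tau,Y_\tau)\le f(\tau,Z_\tau)$ rests on. A second, related problem is that Definition \ref{defn:acd} of the strong Markov property only provides, for each bounded measurable $g$ and fixed increment, a measurable function with $f(\tau,X_\tau)=\E{g(X_{\tau+t})\mid\setsF_\tau}$; it does not provide a regular family of transition kernels or a Markov family $(\PP_{t,x})$, so your pointwise identity $f(t,a)=\E{g(Y_T)}$ for an individual $a\in\support{X_t}$ (typically a null point for the law of $X_t$) is not available without substantial extra construction and a careful choice of version of $f$.

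The paper's proof avoids both issues: it enlarges the space by a product construction to obtain an independent copy $Y$ of the \emph{whole} process $X$ (Lemma \ref{lemma:strong markov stays sm in extension} ensures the strong Markov property survives), so almost-continuity applies legitimately to the pair $(X,Y)$. It then uses the stopped form of the Markov property from Lemma \ref{lemma:strong markov:alternative defn}, i.e.\ a measurable $h$ with $h(T,x)=g(x)$ and $1_{\{\tau>t\}}h(t,X_t)=1_{\{\tau>t\}}\E{h(T\wedge\tau,X_{T\wedge\tau})\mid\setsF_t}$, applied with $\tau=\inf\{s\ge t:X_s\ge Y_s\}$, to get the \emph{almost sure} statement $f(X_t)\le f(Y_t)$ on $\{X_t<Y_t\}$; from this one only concludes that $f$ is increasing on some Borel set $S$ with $\Prob{X_t\in S}=1$, and extends it to all of $\reals$ by the monotone envelope $f(x)=\sup\{f(y):y\in(-\infty,x]\cap S\}$, allowing values $\pm\infty$ (which is why the theorem is stated with range $\reals\cup\{\pm\infty\}$). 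If you recast your argument with two full independent copies of $X$ and aim only for the almost-sure comparison rather than a pointwise one on the support, the rest of your outline (the reduction to bounded $g$, the approximation of the stopping time to justify the stopped Markov identity) falls in line with the paper's Lemma \ref{lemma:strong markov:alternative defn} and goes through.
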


The proof of this is left until Section \ref{sec:expectations}.
Note that this applies to all almost-continuous diffusions, and not just martingales. Under the additional assumption that $X$ is a martingale then by a simple corollary of Theorem \ref{lemma:mgale AC iff never jumps past msupp}, $f$ will be Lipschitz continuous in $x$ whenever $g$ is.
Recall that a function $f$ on the real numbers is Lipschitz continuous if there exists real numbers $k\le K$ such that
\begin{equation}\label{eqn:exp:lipschitz property}
k(y-x)\le f(y)-f(x)\le K(y-x)
\end {equation}
for all $x<y\in\reals$. This is satisfied if and only if the derivative $df(x)/dx$ exists in the measure theoretic sense and can be chosen such that $k\le df(x)/dx\le K$ for all $x\in\reals$.
So, we shall write $k\le f^\prime\le K$ to mean that $f$ satisfies inequality (\ref{eqn:exp:lipschitz property}) for all $x<y\in\reals$.

\begin{theorem}\label{thm:exp of lip is lip}
Let $X$ be an \acd\ martingale, and $g:\reals\rightarrow\reals$ be a Lipschitz continuous function such that $k\le g^\prime\le K$. Then for any $t<T\in\reals_+$ there exists a Lipschitz continuous $f:\reals\rightarrow\reals$ such that $k\le f^\prime\le K$ and
$f(X_t)=\E{g(X_T)|\setsF_t}$.
\end{theorem}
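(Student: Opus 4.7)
The plan is to reduce this to Theorem \ref{thm:exp of increasing is increasing} by exploiting the martingale property to split $g$ into two increasing pieces. Since $k\le g'\le K$, the functions
\begin{displaymath}
g_1(x) = g(x) - kx,\qquad g_2(x) = Kx - g(x)
\end{displaymath}
are both increasing. Because $g$ is Lipschitz and $\E{|X_T|}<\infty$ (as $X$ is a martingale), $g(X_T)$, $g_1(X_T)$ and $g_2(X_T)$ are all integrable. So Theorem \ref{thm:exp of increasing is increasing} supplies increasing functions $f_1,f_2:\reals\rightarrow\reals\cup\{\pm\infty\}$ with $f_i(X_t)=\E{g_i(X_T)\mid\setsF_t}$ almost surely.

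Next I would use the martingale identity $\E{X_T\mid\setsF_t}=X_t$ together with linearity of conditional expectation. Setting $h(x)=f_1(x)+kx$, we get $h(X_t)=\E{g(X_T)\mid\setsF_t}$ almost surely, and the identity $Kx-h(x)=f_2(x)$ holds almost surely for $x=X_t$. Both $f_1$ and $f_2$ take finite values on $\support{X_t}$ since the conditional expectations are a.s.\ finite, so on $\support{X_t}$ the function $h$ is real-valued and satisfies
\begin{displaymath}
k(y-x)\le h(y)-h(x)\le K(y-x)
\end{displaymath}
for all $x<y$ in $\support{X_t}$, because $f_1$ and $-(f_2)$ translated by linear terms give the two bounds. (A minor care: $f_1$ and $Kx-f_2(x)$ agree on $\support{X_t}$ only up to a $\PP\circ X_t^{-1}$-null set; one can enlarge this set and take a common right-continuous version, or simply define $h$ on all of $\support{X_t}$ by right limits of $f_1(x)+kx$ — either way one obtains the bounds above on the closed set $\support{X_t}$.)

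Finally I would extend $h$ from the closed set $S=\support{X_t}$ to all of $\reals$ while preserving $k\le h'\le K$. On each bounded gap $(a,b)$ of $S^c$, the slope $(h(b)-h(a))/(b-a)$ lies in $[k,K]$ by the bound just established, so linear interpolation works. On any unbounded gap at $\pm\infty$, extend linearly with any slope in $[k,K]$, say $k$. Checking pairs $x<y$ with one point inside and one outside $S$ is straightforward: the increment decomposes across the boundary point and each piece satisfies the required bounds. The resulting $f=h$ is defined on all of $\reals$, is Lipschitz with $k\le f'\le K$, and satisfies $f(X_t)=\E{g(X_T)\mid\setsF_t}$ almost surely since $X_t\in S$ a.s.

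The only real subtlety is the book-keeping in the second step — identifying $h$ as a single Lipschitz representative on $\support{X_t}$ from two a.s.\ identities — but once $h$ has the correct slope bounds on the closed set $\support{X_t}$, the extension is routine. The essential input is Theorem \ref{thm:exp of increasing is increasing}; the martingale assumption only enters through the linear cancellation $\E{kX_T\mid\setsF_t}=kX_t$.
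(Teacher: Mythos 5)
Your proposal is correct and follows essentially the same route as the paper: decompose via the increasing functions $g(x)-kx$ and $Kx-g(x)$, use the martingale identity $\E{X_T|\setsF_t}=X_t$ to reduce to Theorem \ref{thm:exp of increasing is increasing}, obtain the two-sided slope bounds on a set of full $X_t$-measure, and extend by linear interpolation and extrapolation. The bookkeeping you flag (reconciling two a.s.\ identities into one representative on $\support{X_t}$) is handled in the paper by extending by uniform continuity to the closure of the full-measure set, which is the same resolution you sketch.
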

\begin{proof}
As $X$ is a martingale and $k\le g^\prime\le K$,
\begin{equation*}
\E{|g(X_T)|}\le |g(0)|+\max{(-k,K)}\E{|X_T|}<\infty,
\end{equation*}
As $X$ is also Markov, we can define $f$ by $f(X_t)=\E{g(X_T)|\setsF_t}$. Then, the martingale property gives
\begin{eqnalign*}
&&f(X_t)-kX_t=\E{g(X_T)-kX_T|\setsF_t},\\
&&KX_t-f(X_t)=\E{KX_T-g(X_T)|\setsF_t}.
\end{eqnalign*}
As $k\le g^\prime\le K$ we see that $g(x)-kx$ and $Kx-g(x)$ are increasing, so Theorem \ref{thm:exp of increasing is increasing} tells us that $f(x)-kx$ and $Kx-f(x)$ are increasing on some set
$S\subseteq\reals$ with $\Prob{X_t\in S}=1$. Therefore equation (\ref{eqn:exp:lipschitz property}) is satisfied for all $x<y\in S$. Finally, $f$ extends to the closure of $S$ by uniform continuity, and can be linearly interpolated and extrapolated outside of the support of $X_t$.
\end{proof}

We now state the properties of $f(t,x)$ satisfying (\ref{eqn:cond exp}) when $g$ is convex. This is the case investigated in \citep{Bergman,Hobson,Karoui} and shows that whenever the price of a financial asset is modeled by a continuous (or almost-continuous) and strong Markov process then option prices will be convex in the asset price and decreasing in time.

\begin{theorem}\label{thm:exp of conv is conv and dec in t}
Let $X$ be an \acd\ martingale, $g:\reals\rightarrow\reals$ be a convex and Lipschitz continuous function and choose any $T\in\reals_+$.
Then there exists a function $f:[0,T]\times\reals\rightarrow\reals$ such that $f(t,x)$ is convex in $x$, decreasing in $t$ and 
\begin{equation}\label{eqn:cond exp of conv}
f(t,X_t)=\E{g(X_T)|\setsF_t}
\end{equation}
for every $t\in[0,T]$.
\end{theorem}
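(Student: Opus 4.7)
The strategy is to reduce to the case where $g$ is convex, Lipschitz, \emph{and} increasing; then to establish convexity of $f(t,\cdot)$ via a monotone three-process coupling in the style of \citep{Hobson}; and finally to deduce the decreasing-in-$t$ property from Jensen's inequality and the martingale property.

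For the reduction, write $g(x)=kx+h(x)$ where $h(x)=g(x)-kx$ is convex, Lipschitz with $0\le h'\le K-k$, and hence also increasing. The martingale property gives $\E{g(X_T)\mid\setsF_t}=kX_t+\E{h(X_T)\mid\setsF_t}$, and the added linear-in-$x$ term preserves convexity in $x$ and monotonicity in $t$, so we may assume $g$ is also increasing. Theorem~\ref{thm:exp of lip is lip} then provides, for each $t\in[0,T]$, a Lipschitz function $f(t,\cdot)$ satisfying (\ref{eqn:cond exp of conv}), so only convexity in $x$ and monotonicity in $t$ remain.

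For convexity in $x$, fix $t<T$ and three points $x_1<x_2<x_3\in\support{X_t}$ with $x_2=\lambda x_1+(1-\lambda)x_3$ for some $\lambda\in(0,1)$. The heart of the argument is to build, on a common probability space, three cadlag processes $\tilde X^1,\tilde X^2,\tilde X^3$ on $[t,T]$ such that each $\tilde X^i$ has the law of $X$ conditional on $X_t=x_i$, and $\tilde X^1_s\le\tilde X^2_s\le\tilde X^3_s$ for all $s\in[t,T]$ almost surely. This monotone coupling is constructed by starting three independent copies of $X$ and swapping pairs of paths at their meeting times: the almost-continuity condition of Definition~\ref{defn:acd} prevents two independent copies from reversing their relative order without first meeting, while the strong Markov property guarantees each swap preserves the individual marginal laws. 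Combining the pathwise ordering with the martingale identity $\E{\tilde X^2_T}=x_2=\lambda\E{\tilde X^1_T}+(1-\lambda)\E{\tilde X^3_T}$ and the convexity and monotonicity of $g$ yields $\E{g(\tilde X^2_T)}\le\lambda\E{g(\tilde X^1_T)}+(1-\lambda)\E{g(\tilde X^3_T)}$, i.e.\ $f(t,x_2)\le\lambda f(t,x_1)+(1-\lambda)f(t,x_3)$. Hence $f(t,\cdot)$ is convex on $\support{X_t}$; uniform continuity extends it to the closure of the support and a linear interpolation beyond the support then gives a globally convex $f(t,\cdot)$ with the same Lipschitz bounds.

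For monotonicity in $t$, use the tower property together with Jensen's inequality applied to the convex function $f(t,\cdot)$: for $s<t\le T$,
\[
f(s,X_s)=\E{f(t,X_t)\mid\setsF_s}\ge f(t,\E{X_t\mid\setsF_s})=f(t,X_s),
\]
using the martingale property of $X$ in the last equality. Hence $f(s,x)\ge f(t,x)$ for $x\in\support{X_s}$, extended to all of $\reals$ along the chosen convex extension. The main obstacle is the three-process monotone coupling, and in particular the passage from pathwise ordering plus the martingale mean constraint to the convexity inequality: the coupling itself rests essentially on almost-continuity, while the convexity deduction exploits monotonicity of $g$ most transparently by reducing to the case $g(x)=(x-K)_+$ via the representation of convex increasing Lipschitz functions as positive combinations of call payoffs plus a linear function.
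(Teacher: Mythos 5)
Your reduction to increasing $g$ and your Jensen argument for monotonicity in $t$ are both fine; the latter is exactly what the paper does, including the point (which you pass over quickly) that the linear extension of $f(t,\cdot)$ off $\support{X_t}$ must be chosen with slope $K$ above and $k$ below the support so that the inequality $f(s,x)\ge f(t,x)$ propagates outside $\support{X_s}$. The gap is in the convexity step. The deduction you claim --- that a pathwise ordering $\tilde X^1_T\le\tilde X^2_T\le\tilde X^3_T$ together with the mean identity $\E{\tilde X^2_T}=\lambda\E{\tilde X^1_T}+(1-\lambda)\E{\tilde X^3_T}$ and convexity and monotonicity of $g$ forces $\E{g(\tilde X^2_T)}\le\lambda\E{g(\tilde X^1_T)}+(1-\lambda)\E{g(\tilde X^3_T)}$ --- is false as stated. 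Take two equally likely states with $(A,B,C)=(0,2,2)$ in the first and $(A,B,C)=(-10,-10,-6)$ in the second: then $A\le B\le C$ pathwise, $\E{A}=-5$, $\E{B}=-4$, $\E{C}=-2$, so $\E{B}=\frac{2}{3}\E{A}+\frac{1}{3}\E{C}$, yet for the convex, increasing, Lipschitz function $g(x)=x_+$ one gets $\E{g(B)}=1$ while $\frac{2}{3}\E{g(A)}+\frac{1}{3}\E{g(C)}=\frac{1}{3}$. Ordering plus the mean constraint does not give the required convex-order comparison of $\tilde X^2_T$ against the mixture of the laws of $\tilde X^1_T$ and $\tilde X^3_T$; some further structural input from the dynamics is indispensable, and your sketch does not identify it.

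The input the paper uses (following Hobson) is not a monotone coupling at all: it keeps the three copies $X^1,X^2,X^3$ independent, takes $\tau$ to be the first time after $t$ at which $X^2$ meets $X^1$ or $X^3$, and considers
\begin{displaymath}
M_s=(X^3_s-X^2_s)h(s,X^1_s)+(X^2_s-X^1_s)h(s,X^3_s)+(X^1_s-X^3_s)h(s,X^2_s),
\end{displaymath}
where $h$ is the Markov function with $h(T,\cdot)=g$ from equation (\ref{eqn:exp:increasing:strong markov coupling}). Since the $X^i$ are martingales in independent filtrations and $s\mapsto h(s,X^j_s)$ is a martingale up to $\tau$, each product $X^i_s\,h(s,X^j_s)$ ($i\ne j$) is a martingale up to $\tau$, giving $1_{\{\tau>t\}}M_t=1_{\{\tau>t\}}\E{M_{T\wedge\tau}|\setsF_t}$. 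Almost-continuity forces $M_\tau=0$ when $t<\tau\le T$ (two of the three coordinates coincide, annihilating the determinant), while convexity of $g$ alone gives $M_T\ge0$ on $\{\tau>T\}$; hence $M_t\ge 0$ on $\{X^1_t<X^2_t<X^3_t\}$, which is exactly convexity of $f(t,\cdot)$ on $\support{X_t}$ with the \emph{random} weights $X^3_t-X^2_t$, etc., in place of your fixed $\lambda$. Note that this needs neither monotonicity of $g$ nor any path-swapping construction (which, for a general \acd\ rather than an SDE solution, would itself require a careful justification that you do not supply). You should replace the coupling-plus-mean-constraint step with this determinant and first-meeting-time argument, or else supply the missing structural fact that rules out configurations like the counterexample above.
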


The proof of this is given in Section \ref{sec:expectations}, and follows the coupling idea of \citep{Hobson}. As we now show, $f$ will be jointly continuous on the marginal support of $X$. The requirement in the following result that locally $f(t,x)$ is uniformly continuous in $x$, just means that it is uniformly continuous in $x$ on every bounded subset of $[0,T]\times\reals$.

\begin{theorem}
Let $X$ be a real valued and adapted process which is continuous in probability. Suppose that $T>0$ and $f:[0,T]\times\reals\rightarrow\reals$ is such that locally $f(t,x)$ is uniformly continuous in $x$ and $f(t,X_t)$ is a martingale. Then, $f$ is jointly continuous on the marginal support of $X$.
\end{theorem}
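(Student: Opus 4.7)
The plan is to fix a point $(t_0,x_0)\in\msupport{X}$ and a sequence $(t_n,x_n)\in\msupport{X}$ with $(t_n,x_n)\to(t_0,x_0)$, and show $f(t_n,x_n)\to f(t_0,x_0)$. The key idea is to represent the deterministic value $f(t_n,x_n)$ as a local spatial average of $f(t_n,X_{t_n})$ and use the martingale property to push the relevant expectation from time $t_n$ to the common terminal time $T$. Choose $\delta>0$ outside the countable set of atoms of $X_{t_0}$, set $B_n^\delta=\{X_{t_n}\in(x_0-\delta,x_0+\delta)\}$ and $p_n^\delta=\Prob{B_n^\delta}$. Since $x_0\in\support{X_{t_0}}$ we have $p_0^\delta>0$, and continuity in probability of $X$ together with the portmanteau theorem gives $p_n^\delta\to p_0^\delta$.

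Local uniform continuity of $f$ in $x$ on a bounded box containing all the relevant points produces a modulus $\omega$ with $|f(t_n,X_{t_n})-f(t_n,x_n)|\le\omega(\delta+|x_n-x_0|)$ on $B_n^\delta$; averaging and dividing by $p_n^\delta$ gives
\begin{equation*}
\bigl|f(t_n,x_n)-\E{f(t_n,X_{t_n})1_{B_n^\delta}}/p_n^\delta\bigr|\le\omega(\delta+|x_n-x_0|),
\end{equation*}
and the analogous bound with $\omega(\delta)$ at $(t_0,x_0)$. Because $B_n^\delta\in\setsF_{t_n}$ and $f(\cdot,X_\cdot)$ is a martingale closed by $f(T,X_T)\in L^1$, the tower property gives $\E{f(t_n,X_{t_n})1_{B_n^\delta}}=\E{f(T,X_T)1_{B_n^\delta}}$, and similarly at $t_0$. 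Continuity in probability of $X$ forces $1_{B_n^\delta}\to 1_{B_0^\delta}$ in probability (here the no-atom choice of $\delta$ is essential), so the dominated convergence theorem yields $\E{f(T,X_T)1_{B_n^\delta}}\to\E{f(T,X_T)1_{B_0^\delta}}$, and dividing by $p_n^\delta\to p_0^\delta>0$ shows that the local averages converge.

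Chaining these three estimates through the triangle inequality gives $\limsup_n|f(t_n,x_n)-f(t_0,x_0)|\le 2\omega(\delta)$, and sending $\delta\to 0$ through a sequence avoiding atoms of $X_{t_0}$ completes the argument. The main difficulty is conceptual rather than technical: one has to find an averaging scheme which simultaneously (i) controls $f(t_n,x_n)$ via the local uniform continuity hypothesis, (ii) is measurable at time $t_n$ so that the martingale property collapses the average to a single expectation against $f(T,X_T)$, and (iii) has strictly positive weight. Property (iii) is precisely what the assumption $(t_0,x_0)\in\msupport{X}$ provides, which also explains why joint continuity is only asserted on the marginal support.
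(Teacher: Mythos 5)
Your argument is correct, and it takes a genuinely different route from the paper's. The paper works with the martingale $M_t=f(t,X_t)$ directly: it invokes the existence of the one-sided limits $M_{t\pm}$, observes that $M_{t+}=\lim_n f(t_n,X_t)$ is $\setsF_t$-measurable (so $M_{t+}=\E{M_{t+}|\setsF_t}=M_t$) and that $X_t$ is $\setsF_{t-}$-measurable (so $M_{t-}=\E{M_t|\setsF_{t-}}=M_t$), and then converts the resulting convergence in probability of $f(t_n,X_t)$ to $f(t,X_t)$ into pointwise convergence $f(t_n,x)\rightarrow f(t,x)$ for $x\in\support{X_t}$ via the local uniform continuity in $x$. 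Your proof replaces this regularization machinery (one-sided limits, backwards martingale convergence) with the elementary identity $\E{f(t_n,X_{t_n})1_{B_n^\delta}}=\E{f(T,X_T)1_{B_n^\delta}}$ at fixed deterministic times, recovering the deterministic values $f(t_n,x_n)$ by averaging over the windows $B_n^\delta$; the positivity of the weight $p_0^\delta$ is exactly where $(t_0,x_0)\in\msupport{X}$ enters, matching the role the support plays in the last step of the paper's argument. Your version treats left and right approach in one stroke and in fact establishes continuity of $f$ at each point of $\msupport{X}$ relative to all of $[0,T]\times\reals$, not merely relative to the marginal support (the paper's proof yields this too, though the statement is weaker). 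Two cosmetic points: for large $n$ the final bound should read $\omega(2\delta)+\omega(\delta)$ rather than $2\omega(\delta)$, and you should note explicitly that the countably many bad radii (those $\delta$ for which $x_0\pm\delta$ is an atom of $X_{t_0}$) can be avoided when sending $\delta\rightarrow 0$; neither affects the conclusion.
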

\begin{proof}
Set $M_t=f(t,X_t)$ for $t\le T$. As it is a martingale, it has left and right limits everywhere. We shall prove left and right continuity separately. First choose any sequence $t_n\in[0,T]$ with $t_n\downarrow\downarrow t$. The continuity in probability of $X_t$ and the local uniform continuity of $f(t,x)$ in $x$ gives
\begin{equation*}
M_{t_+}=\lim_{n\rightarrow\infty} f(t_n,X_{t_n})=\lim_{n\rightarrow\infty}f(t_n,X_t)
\end{equation*}
where convergence is in probability. In particular, we see that $M_{t+}$ is $\setsF_t$-measurable, so $M_t=\E{M_{t+}|\setsF_t}=M_{t+}$ and
\begin{equation*}
f(t,X_t)=M_t=\lim_{n\rightarrow\infty}f(t_n,X_t).
\end{equation*}
Then the local uniform continuity of $f(t,x)$ in $x$ shows that $f(t_n,x)\rightarrow f(t,x)$ for all $x$ in the support of $X_t$, and $f(t,x)$ is indeed right-continuous in $t$ on $\msupport{X}$.

Now choose $t_n\in[0,T]$ such that $t_n\uparrow\uparrow t$. Arguing as above, we have
\begin{equation*}
M_{t-}=\lim_{n\rightarrow\infty}f(t_n,X_{t_n})=\lim_{n\rightarrow\infty}f(t_n,X_t).
\end{equation*}
However, the continuity in probability of $X_t$ implies that $X_t$ is $\setsF_{t-}$-measurable. So, $M_{t-}=\E{M_t|\setsF_{t-}}=M_t$ and,
\begin{equation*}
f(t,X_t)=M_t=\lim_{n\rightarrow\infty}f(t_n,X_t).
\end{equation*}
As above, this implies that $f(t_n,x)\rightarrow f(t,x)$ for every $x$ in the support of $X_t$, and $f(t,x)$ is left-continuous in $t$ on the marginal support of $X$.
\end{proof}

In particular, suppose that $X$ is an \acd\ martingale, $g$ is Lipschitz continuous, and $f$ is defined by equation (\ref{eqn:cond exp}). Then $f(t,X_t)$ is a martingale and Theorem \ref{thm:exp of lip is lip} says that $f(t,x)$ can be chosen to be Lipschitz continuous in $x$. The above result then states that $f$ is jointly continuous on $\msupport{X}$.
As we shall show in Section \ref{sec:msupport}, the paths of $X$ will lie within its marginal support (Lemmas \ref{lemma:process lies in msupp} and \ref{lemma:uniqueness:left lim of x is in msupp}) at all times, so we can conclude that $f(t,X_t)$ almost surely has cadlag paths and can only jump at times when $X$ jumps.

In fact, as we now show by a counterexample, it is generally not possible to choose $f$ to be continuous outside of the marginal support of $X$.
To construct our example, we first let $B$ be a Brownian motion and let $\tau$ be the random time
\begin{equation*}
\tau=\inf\left\{t\in\reals_+:|B_t|=1\right\}<\infty.
\end{equation*}
This is a stopping time with respect to the filtration generated by $B$, and $B_\tau=\pm 1$. Define the process $X$ by
\begin{equation*}
X_t=\left\{
\begin{array}{ll}
B_{t/(1-t)},&\textrm{if }0\le t<\tau/(1+\tau),\\
B_\tau,&\textrm{if }\tau/(1+\tau)\le t\le 1,\\
B_{\tau+t-1},&\textrm{if }t>1.
\end{array}
\right.
\end{equation*}
Then, under its natural filtration, $X$ is a continuous strong Markov martingale satisfying the SDE (\ref{eqn:SDE}) with
\begin{equation*}
\sigma(t,x) =\frac{1}{1-t}1_{\{(t,x)\in[0,1)\times(-1,1)\}} + 1_{\{t\ge 1\}}
\end{equation*}
The marginal support of $X$ is
\begin{equation*}
\msupport{X}=\left\{(0,0),(1,-1),(1,1)\right\}\cup\left((0,1)\times[-1,1]\right)\cup\left((1,\infty)\times\reals\right).
\end{equation*}
Note, in particular, that at time $t=1$ the support of $X_t$ is not connected, consisting of just the two points $\{-1,1\}$.
Now let $g(x)=x^2$, choose any $T>1$ and let $f(t,x)$ satisfy equation (\ref{eqn:cond exp}). If $t\in[1,T]$ then
\begin{equation*}
f(t,X_t)=\E{X_T^2|\setsF_t}=X_t^2 + T-t.
\end{equation*}
And, for $t\le 1$ we can use $X_1=\pm 1$ to get
\begin{equation*}
f(t,X_t)=\E{f(1,X_1)|\setsF_t}=\E{X_1^2+T-1|\setsF_t}=T
\end{equation*}
As $|X_t|\le 1$ for $t< 1$, we can choose whatever value we like for $f(t,x)$ in the range $t<1$ and $|x|>1$. In particular, $f$ can be chosen to be
\begin{equation*}
f(t,x) = \left\{
\begin{array}{ll}
x^2+T-t,&\textrm{if $1\le t\le T$},\\
T,&\textrm{if $t<1$ and $|x|\le 1$},\\
x^2+T-1,&\textrm{if $t<1$ and $|x|>1$}.
\end{array}
\right.
\end{equation*}
Note that this is convex in $x$ and right-continuous and decreasing in $t$. Although Theorem \ref{thm:exp of conv is conv and dec in t} does not quite apply in this case, because the derivative of $g$ is not bounded, it should be clear that it could be approximated by Lipschitz continuous functions, so the fact that $f(t,x)$ is convex in $x$ and decreasing in $t$ will still follow from Theorem \ref{thm:exp of conv is conv and dec in t}. We just chose $g(x)=x^2$ for simplicity, but any convex and non-linear function would do just as well.

We see that $f$ is jointly continuous everywhere except for the line $\{1\}\times(-1,1)$, on which
\begin{eqnalign*}
&&f(1,x) = x^2 + T-1,\\
&&f(1-,x) = T.
\end{eqnalign*}
So $f$ is discontinuous on this line, but is continuous everywhere on the marginal support of $X$.

\section{The Strong Markov Property}

In order to prove the results of Section \ref{sec:intro} we shall make use of the strong Markov property. However, the form in which it is stated in Definition \ref{defn:acd} is not the easiest to work with, so we shall make use of the following lemma which rephrases the strong Markov property in a slightly different way.
We are only interested in real valued processes here, although the following proofs will generalize to any Polish space.

\begin{lemma}\label{lemma:strong markov:alternative defn}
Let $X$ be a real valued cadlag process that satisfies the strong Markov property.
For every $T\in\reals_+$ and bounded measurable $g:\reals\rightarrow\reals$ there exists a measurable $f:[0,T]\times\reals\rightarrow\reals$ such that for every stopping time $\tau$
\begin{equation*}
1_{\{\tau\le T\}}f(\tau,X_\tau) = 1_{\{\tau\le T\}}\E{g(X_T)\mid \setsF_\tau}.
\end{equation*}
\end{lemma}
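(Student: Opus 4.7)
My plan is to apply the strong Markov property of Definition~\ref{defn:acd} separately at each $q$ in a countable dense subset of $[0,T]$ in order to pin down $f$ pointwise there, verify the identity first for stopping times taking values in this set, and then extend to arbitrary stopping times by approximation from above.

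Concretely, let $D_n := \{kT/2^n : 0 \le k \le 2^n\}$ and $D := \bigcup_n D_n$, so $T \in D$. For each $q \in D$, applying the strong Markov property with time increment $T-q \ge 0$ produces a measurable $\phi_q : \halfplane \to \reals$ such that
\[
\phi_q(\sigma, X_\sigma) = \E{g(X_{\sigma + T - q}) \mid \setsF_\sigma}
\]
for every finite stopping time $\sigma$; specializing to the constant $\sigma \equiv q$ gives $\phi_q(q, X_q) = \E{g(X_T) \mid \setsF_q}$. I would then define $f(q, x) := \phi_q(q, x)$ on $D \times \reals$ and extend measurably to $[0,T] \times \reals$ by, for instance, $f(s, x) := \limsup_n \phi_{q_n(s)}(q_n(s), x)$ with $q_n(s) := \min\{q \in D_n : q \ge s\}$ (which is well-defined since $T \in D_n$).

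For a stopping time $\tau$ with values in $D \cap [0,T]$, the event $\{\tau = q\}$ lies in $\setsF_q$ and $\setsF_\tau$ agrees with $\setsF_q$ there, so
\[
1_{\{\tau = q\}} f(\tau, X_\tau) = 1_{\{\tau = q\}} \phi_q(q, X_q) = 1_{\{\tau = q\}} \E{g(X_T) \mid \setsF_\tau},
\]
and summing over the countably many $q \in D \cap [0,T]$ yields the identity in this case. For a general stopping time $\tau \le T$ I would approximate from above by $\tau_n := q_n(\tau) \in D_n$, which is itself a stopping time (since $q_n$ is a deterministic, increasing, right-continuous function of its argument) with $\tau_n \downarrow \tau$. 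The dyadic case applied to $\tau_n$ then gives $f(\tau_n, X_{\tau_n}) = \E{g(X_T) \mid \setsF_{\tau_n}}$ on $\{\tau \le T\}$, and the backward martingale convergence theorem sends the right-hand side to $\E{g(X_T) \mid \setsF_\tau}$ in $L^1$ and a.s.\ along a subsequence (using the customary right-continuity of the filtration so that $\setsF_{\tau+}=\setsF_\tau$).

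The main obstacle is reconciling this limit with $f(\tau, X_\tau)$ as defined by the limsup extension: we have $f(\tau_n, X_{\tau_n}) = \phi_{q_n(\tau)}(q_n(\tau), X_{\tau_n})$, whereas the definition evaluates $\phi_{q_n(\tau)}$ at $X_\tau$. Since $X$ is cadlag and $\tau_n \downarrow \tau$ we do have $X_{\tau_n} \to X_\tau$ almost surely, but the $\phi_q$'s need not be continuous in the space variable, so the swap requires care. I would handle this by passing to a common almost-sure subsequence along which both the spatial convergence and the conditional expectation convergence hold, and then use the uniqueness of the $\setsF_\tau$-measurable $L^1$ limit to identify $f(\tau, X_\tau)$ with $\E{g(X_T) \mid \setsF_\tau}$ on $\{\tau \le T\}$; this measurable-selection-flavored step is the technically delicate part of the argument.
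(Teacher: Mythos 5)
Your argument works for stopping times taking values in the dyadic set $D$, but the extension to general stopping times has a genuine gap, and it is exactly the one you flag in your last paragraph. The quantity you control is $\phi_{q_n(\tau)}(q_n(\tau), X_{\tau_n}) = \E{g(X_T)\mid\setsF_{\tau_n}}$, whereas your definition of $f$ forces you to evaluate $\phi_{q_n(\tau)}(q_n(\tau),\cdot)$ at $X_\tau$. Since $\phi_q$ is only measurable in the space variable, the almost sure convergence $X_{\tau_n}\to X_\tau$ gives no control over the difference $\phi_{q_n(\tau)}(q_n(\tau),X_\tau)-\phi_{q_n(\tau)}(q_n(\tau),X_{\tau_n})$. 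Passing to a common almost-sure subsequence does not help: the sequence $\phi_{q_n(\tau)}(q_n(\tau),X_\tau)$ is never shown to converge to anything, so there is no $\setsF_\tau$-measurable $L^1$ limit whose uniqueness you could invoke. The strong Markov identity constrains $\phi_q(\sigma,x)$ only at points $x$ paired with the value $X_\sigma$ of the process at the \emph{same} time $\sigma$; at the mismatched pair $(q_n(\tau),X_\tau)$ it says nothing, and a counterexample to the swap can be manufactured whenever the conditional law of $X_T$ given $X_q=x$ is genuinely discontinuous in $x$.

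The paper's proof avoids this by discretizing the time-to-maturity $T-\tau$ instead of the time $\tau$: for each $n$ it invokes the strong Markov property with the increments $k/n$ to obtain functions $f_{k/n}$, and glues them into $h_n(t,x)=\sum_k 1_{\{T-k/n<t\le T-(k-1)/n\}}f_{k/n}(t,x)$. The point is that $h_n$ is always evaluated at the fixed pair $(\tau,X_\tau)$ and the conditioning $\sigma$-field is always $\setsF_\tau$; what varies with $n$ is only the time $\tau_n=\tau+([n(T-\tau)]+1)/n\in(T,T+1/n]$ appearing inside the conditional expectation, so that $1_{\{\tau\le T\}}h_n(\tau,X_\tau)=1_{\{\tau\le T\}}\E{g(X_{\tau_n})\mid\setsF_\tau}$. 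The only limit to take is $X_{\tau_n}\to X_T$ inside the expectation, which follows from right-continuity of $X$ together with continuity of $g$ and bounded convergence (one first treats continuous $g$ and then extends by the monotone class lemma). Note that rounding the increment \emph{up} is essential here: rounding down would require left-continuity of $X$ at $T$, which is not available. To salvage your scheme you would need to restructure it along these lines; as written, the final identification step does not go through.
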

\begin{proof}
Choose any $T\in\reals_+$ and bounded continuous $g:\reals\rightarrow\reals$. From Definition \ref{defn:acd} of the strong Markov property, for every $\alpha>0$ there exists a measurable $f_\alpha:\halfplane\rightarrow\reals$ such that for every stopping time $\tau$,
\begin{equation*}
1_{\{T<\infty\}}f_\alpha(\tau,X_\tau) = 1_{\{\tau<\infty\}}\E{g(X_{\tau+\alpha})\mid \setsF_\tau}.
\end{equation*}
Now pick any $n\in\nat$. For any stopping time $\tau$ let $\tau_n$ be the stopping time
\begin{equation*}
\tau_n=\left\{
\begin{array}{ll}
\infty,&\textrm{if }\tau>T,\\
\tau + ([n(T-\tau)]+1)/n,&\textrm{if }\tau\le T.
\end{array}
\right.
\end{equation*}
Here we are using the notation $[x]$ to denote the largest integer that is less than or equal to $x$. Define the measurable function $h_n:[0,T]\times\reals\rightarrow\reals$ by
\begin{equation*}
h_n(t,x)=\sum_{k=1}^\infty1_{\left\{T-\frac{k}{n}<t\le T-\frac{k-1}{n}\right\}}f_{\frac{k}{n}}(t,x).
\end{equation*}
So,
\begin{eqnalign*}
1_{\{\tau\le T\}}h_n(\tau,X_\tau) &=& \sum_{k=1}^\infty1_{\left\{t-\frac{k}{n}<\tau\le T-\frac{k-1}{n}\right\}} \E{g\left(X_{\tau+\frac{k}{n}}\right)\big|\setsF_\tau}\\
&=&\E{\sum_{k=1}^\infty1_{\left\{T-\frac{k}{n}<\tau\le T-\frac{k-1}{n}\right\}} g\left(X_{\tau+\frac{k}{n}}\right)\bigg|\setsF_\tau}\\
&=&1_{\{\tau\le T\}}\E{g(X_{\tau_n})|\setsF_\tau}.
\end{eqnalign*}
From the definition of $\tau_n$ we have $T<\tau_n\le T+\frac{1}{n}$ whenever $\tau\le T$, so $\tau_n\rightarrow T$ as $n\rightarrow\infty$.
By the right-continuity of $X$ and the continuity of $g$, we can use bounded convergence to get
\begin{equation*}
1_{\{\tau\le T\}}\E{g(X_{T})|\setsF_\tau}=\lim_{n\rightarrow\infty}1_{\{\tau\le T\}}h_n(\tau,X_\tau).
\end{equation*}
So the result for continuous $g$ follows by setting
\begin{equation*}
f(t,x)=\limsup_{n\rightarrow\infty}h_n(t,x).
\end{equation*}
Then it extends to arbitrary bounded measurable $g$ by the Monotone Class Lemma.
\end{proof}

Given a strong Markov process $X$ we shall require the existence of a process which is independent of $X$ and with the same distribution.
One way to construct such a process is to take the product of the underlying probability space with itself.

For any two filtered probability spaces
\begin{equation*}
\left(\Omega_1,\setsF^1,(\setsF^1_t)_{t\in\reals_+},\PP_1\right),\ 
\left(\Omega_2,\setsF^2,(\setsF^2_t)_{t\in\reals_+},\PP_2\right)
\end{equation*}
we can form the product
\begin{equation}\label{eqn:strongmarkov:product of bases}
\left(\Omega,\setsF,(\setsF_t)_{t\in\reals_+},\PP\right)
=
\left(\Omega_1\times\Omega_2,\setsF^1\otimes\setsF^2,(\setsF^1_t\otimes\setsF^2_t)_{t\in\reals_+},\PP_1\otimes\PP_2\right).
\end{equation}
Any process defined on one of the two original probability spaces lifts to a process defined on their product (by combining with the projection map). When we pass to a larger probability space like this, we need to know that strong Markov processes will still be strong Markov.

\begin{lemma}\label{lemma:strong markov stays sm in extension}
Let
\begin{eqnalign*}
&&X:\reals_+\times\Omega_1\rightarrow\reals,\\
&&(t,\omega_1)\mapsto X_t(\omega_1)
\end{eqnalign*}
be a strong Markov process on the filtered probability space $\left(\Omega_1,\setsF^1,(\setsF^1_t)_{t\in\reals_+},\PP_1\right)$.

If we lift $X$ to the process $\tilde X$ defined on the product of the filtered probability spaces in equation (\ref{eqn:strongmarkov:product of bases})
\begin{eqnalign*}
&&\tilde X:\reals_+\times\Omega\rightarrow\reals,\\
&&(t,\omega_1,\omega_2)\mapsto \tilde X_t(\omega_1,\omega_2)\equiv X_t(\omega_1)
\end{eqnalign*}
then $\tilde X$ is also strong Markov.
\end{lemma}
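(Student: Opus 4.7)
The strategy is to reuse the very same function $f$ that witnesses the strong Markov property of $X$ on $\Omega_1$, and to transfer the defining identity to the product space by sectioning over $\omega_2$ and applying Fubini. Fix a bounded measurable $g:\reals\rightarrow\reals$ and $t\in\reals_+$. By Definition \ref{defn:acd} applied to $X$, there is a measurable $f:\halfplane\rightarrow\reals$ with $f(\tau,X_\tau)=\E{g(X_{\tau+t})\mid\setsF^1_\tau}$ under $\PP_1$ for every finite $(\setsF^1_t)$-stopping time $\tau$, and I would show that this same $f$ witnesses the strong Markov property of $\tilde X$.

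Given any finite stopping time $\tilde\tau$ on the product, the first step is to verify that for each $\omega_2\in\Omega_2$ the section $\tau^{\omega_2}(\omega_1)=\tilde\tau(\omega_1,\omega_2)$ is an $(\setsF^1_t)$-stopping time. This follows because $\{\tau^{\omega_2}\le s\}$ is the $\omega_2$-section of $\{\tilde\tau\le s\}\in\setsF^1_s\otimes\setsF^2_s$, and hence lies in $\setsF^1_s$ by the standard section lemma for product $\sigma$-algebras. The analogous fact for $\setsF_{\tilde\tau}$ is that for every $F\in\setsF_{\tilde\tau}$ the section $F^{\omega_2}$ lies in $\setsF^1_{\tau^{\omega_2}}$, since $F^{\omega_2}\cap\{\tau^{\omega_2}\le s\}$ is the $\omega_2$-section of $F\cap\{\tilde\tau\le s\}\in\setsF_s$. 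Fubini applied to $\{\tilde\tau<\infty\}$ shows that $\tau^{\omega_2}$ is $\PP_1$-a.s.\ finite for $\PP_2$-a.e.\ $\omega_2$, so the strong Markov property of $X$ is available at $\tau^{\omega_2}$ after modifying on a null set.

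With this in place, I would test the conditional expectation identity against an arbitrary $F\in\setsF_{\tilde\tau}$. Boundedness of $g$ justifies Fubini, so $\E{1_F\,g(\tilde X_{\tilde\tau+t})}$ equals the double integral over $\Omega_2$ then $\Omega_1$ of $1_{F^{\omega_2}}(\omega_1)\,g(X_{\tau^{\omega_2}(\omega_1)+t}(\omega_1))$. For almost every $\omega_2$, the strong Markov property of $X$ at $\tau^{\omega_2}$, combined with $F^{\omega_2}\in\setsF^1_{\tau^{\omega_2}}$, lets me replace the inner integrand by $1_{F^{\omega_2}}(\omega_1)\,f(\tau^{\omega_2}(\omega_1),X_{\tau^{\omega_2}(\omega_1)}(\omega_1))$, and reassembling by Fubini produces $\E{1_F\,f(\tilde\tau,\tilde X_{\tilde\tau})}$. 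Since $f(\tilde\tau,\tilde X_{\tilde\tau})$ is plainly $\setsF_{\tilde\tau}$-measurable, this identifies it as the required conditional expectation. The main obstacle is the section-measurability bookkeeping---specifically confirming that $F\in\setsF_{\tilde\tau}$ implies $F^{\omega_2}\in\setsF^1_{\tau^{\omega_2}}$; once the product-$\sigma$-algebra sections are handled correctly, everything else reduces to applying strong Markov fiberwise and repackaging with Fubini.
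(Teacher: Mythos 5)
Your proposal is correct and follows essentially the same route as the paper: keep the same witnessing function $f$, section the product-space stopping time and the $\setsF_\tau$-measurable test object over $\omega_2$, apply the strong Markov property of $X$ fiberwise, and reassemble with Fubini. In fact you supply more of the measurability bookkeeping (that the sections of the stopping time and of sets in $\setsF_{\tilde\tau}$ land in $\setsF^1_s$ and $\setsF^1_{\tau^{\omega_2}}$) than the paper's proof, which simply asserts these facts.
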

\begin{proof}
Choose any bounded measurable $g:\reals\rightarrow\reals$ and $\alpha>0$. As $X$ is strong Markov there exists a measurable $f:\halfplane\rightarrow\reals$ such that for every finite stopping time $\tau$ defined on the first filtered probability space,
\begin{equation*}
1_{\{\tau<\infty\}}f(\tau,X_\tau)=1_{\{\tau<\infty\}}\E{g(X_{\tau+\alpha})|\setsF^1_\tau}.
\end{equation*}
This is equivalent to the statement
\begin{equation*}
\E{U\left(f(\tau,X_\tau)-g(X_{\tau+\alpha})\right)}=0
\end{equation*}
for every bounded $\setsF^1_\tau$-measurable random variable $U$.

Now choose any finite stopping time $\tau$ defined with respect to the product of the filtered probability spaces, and let $U$ be a bounded $\setsF_\tau$-measurable random variable. For any $\omega_2\in\Omega_2$ define the $\setsF^1$-stopping time
\begin{eqnalign*}
&&\tau_{\omega_2}:\Omega_1\rightarrow\reals_+,\\
&&\tau_{\omega_2}(\omega_1)= \tau(\omega_1,\omega_2)
\end{eqnalign*}
and the $\setsF^1_{\tau_{\omega_2}}$-measurable random variable
\begin{eqnalign*}
&&U_{\omega_2}:\Omega_1\rightarrow\reals,\\
&&U_{\omega_2}(\omega_1)=U(\omega_1,\omega_2).
\end{eqnalign*}
Then,
\begin{eqnalign*}
&&\E{U\left(g(\tau,\tilde X_\tau)-f(\tilde X_{\tau+\alpha})\right)}\\
&&=\int\E{U_{\omega_2}\left(g(\tau_{\omega_2},X_{\tau_{\omega_2}}) - f(X_{\tau_{\omega_2}+\alpha})\right)}\,d\PP(\omega_2)\\
&&=0
\end{eqnalign*}
so $\tilde X$ is indeed strong Markov.
\end{proof}

\section{Conditional Expectations}
\label{sec:expectations}

We now move on to proving theorems \ref{thm:exp of increasing is increasing} and \ref{thm:exp of conv is conv and dec in t}.
The method of proof employs a similar idea to the coupling arguments used in \citep{Hobson}. The idea is to take several independent copies of the process, and observe them up until the first time that they touch.

If $X$ is any strong Markov process, $T\in\reals_+$ and $g:\reals\rightarrow\reals$ is any measurable function satisfying $\E{|g(X_T)|}<\infty$, then it follows from Lemma \ref{lemma:strong markov:alternative defn} that there exists a measurable $h:[0,T]\times\reals\rightarrow\reals$ such that
$h(T,x)=g(x)$ and
\begin{equation}\label{eqn:exp:increasing:strong markov coupling}
1_{\{\tau>t\}}h(t,X_t)= 1_{\{\tau> t\}}\E{h(T\wedge \tau,X_{T\wedge \tau})|\setsF_t}.
\end{equation}
for any $t\in[0,T]$ and any stopping time $\tau$.
We can make use of this to prove the results we need for conditional expectations of almost-continuous diffusions. We start by proving Theorem \ref{thm:exp of increasing is increasing}, that conditional expectations of increasing functions are themselves increasing.

\begin{proof}[Proof of Theorem \ref{thm:exp of increasing is increasing}]
First we can extend the probability space by taking the product of the underlying filtered probability space with itself. Then there exists a process $Y$ independent of $X$ which is also strong Markov and has the same distribution as $X$ (from Lemma \ref{lemma:strong markov stays sm in extension}). As $X$ is strong Markov there exists a measurable $h:[0,T]\times\reals\rightarrow\reals$ such that $h(T,x)=g(x)$ and equation (\ref{eqn:exp:increasing:strong markov coupling}) is satisfied for any stopping time $\tau$. Setting $f(x)= h(t,x)$ gives $f(X_t)=\E{g(X_T)|\setsF_t}$.
We just need to show that $f$ can be chosen to be increasing.

Let $\tau$ be the stopping time
\begin{equation*}
\tau = \inf\left\{s\in[t,\infty):X_s\ge Y_s\right\}.
\end{equation*}
Note that $\{\tau>t\}=\{X_t<Y_t\}$.

As $X$ is almost-continuous, $X_\tau=Y_\tau$ whenever $t<\tau<\infty$. Also, as $g$ is increasing, $g(X_T)\le g(Y_T)$ whenever $\tau>T$.
So equation (\ref{eqn:exp:increasing:strong markov coupling}) gives
\begin{eqnalign*}
1_{\{\tau>t\}}\left( f(Y_t)-f(X_t)\right)
&=&1_{\{\tau>t\}}\E{h(T\wedge \tau,Y_{T\wedge \tau})-h(T\wedge \tau,X_{T\wedge \tau})\big|\setsF_t}\\
&=&1_{\{\tau>t\}}\E{1_{\{\tau>T\}}\left(g(Y_T)-g(X_T)\right)\big|\setsF_t}
\ge 0.
\end{eqnalign*}
Therefore, $f(X_t)\le f(Y_t)$ whenever $X_t<Y_t$ (a.s.).

This shows that $f$ is increasing in an almost sure sense. That is, there exists a measurable subset $S$ of $\reals$ such that $f$ is increasing on $S$ and
$\Prob{X_t\in S}=1$. So, we can extend $f$ outside of $S$ by
\begin{equation*}
f(x) = \sup\left\{f(y):y\in(-\infty,x]\cap S\right\}\in\reals\cup\{\pm\infty\}.\qedhere
\end{equation*}
\end{proof}

We now show that convexity is also preserved under taking conditional expectations of functions of \acd\ martingales.
This was proven in \citep{Hobson} for the case of continuous diffusions driven by an SDE, by considering running three independent copies of the process and using a coupling method. However, their proof only really relied on continuity and the strong Markov condition, and can be extended to all \acd\ martingales.

We use the condition that a function $f:S\rightarrow\reals$ for $S\subseteq\reals$ is convex if and only if
\begin{equation}\label{eqn:convex:conv condition}
(z-y)f(x)+(y-x)f(z)+(x-z)f(y)\ge 0
\end{equation}
for any $x<y<z\in S$.
Also, for any convex function $f$ from the reals to the reals, the right hand derivative $f^\prime(x)$ exists and is right-continuous and increasing in $x$.

\begin{lemma}\label{lemma:exp:exp of conv is conv}
Let $X$ be an \acd\ martingale, and $g:\reals\rightarrow\reals$ be a convex function with $k\le g^\prime\le K$. Then for any $t<T\in\reals_+$ there exists a convex function $f:\reals\rightarrow\reals$ such that
\begin{equation}\label{eqn:cond exp of convex func}
f(X_t)=\E{g(X_T)|\setsF_t}
\end{equation}
and $k\le f^\prime\le K$.
\end{lemma}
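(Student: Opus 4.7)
The Lipschitz bounds and the very existence of $f$ both follow directly from Theorem \ref{thm:exp of lip is lip}, so the entire task is to prove convexity. The strategy is a three-copy variant of Hobson's coupling: run three independent copies of $X$ on an enlarged probability space, couple them via a single stopping time, and recognise the convexity inequality \eqref{eqn:convex:conv condition} for $f$ as the $\setsF_t$-conditional expectation of a pathwise nonnegative quantity.

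To set this up, I iterate Lemma \ref{lemma:strong markov stays sm in extension} so as to lift $X$ to a product filtration carrying three independent strong Markov martingales $X^{(1)},X^{(2)},X^{(3)}$, each with the same law as $X$. Using Lemma \ref{lemma:strong markov:alternative defn} I fix a measurable $h:[0,T]\times\reals\to\reals$ with $h(T,\cdot)=g$ for which $s\mapsto h(s,X^{(i)}_s)$ is a martingale on $[0,T]$ for each $i$, and I set $f=h(t,\cdot)$. Define
\[
\tau=\inf\{s\ge t: X^{(1)}_s\ge X^{(2)}_s \text{ or } X^{(2)}_s\ge X^{(3)}_s\}
\]
together with
\[
C_s=(X^{(3)}_s-X^{(2)}_s)h(s,X^{(1)}_s)-(X^{(3)}_s-X^{(1)}_s)h(s,X^{(2)}_s)+(X^{(2)}_s-X^{(1)}_s)h(s,X^{(3)}_s).
\]

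The key technical point is that $C$ is a martingale on $[0,T]$: expanding $C_s$ gives six cross-terms of the form $X^{(i)}_s h(s,X^{(j)}_s)$ with $i\ne j$, and since those factors live on independent marginals of the product space each such product is a product of independent cadlag martingales, hence a martingale on the joint filtration by a one-line iterated-conditioning argument (integrability is immediate from the Lipschitz control on $g$, which makes each $h(s,X^{(i)}_s)$ an $L^1$ martingale). Optional stopping at the bounded time $T\wedge\tau$ then yields $C_t=\E{C_{T\wedge\tau}\mid\setsF_t}$ on the event $E=\{X^{(1)}_t<X^{(2)}_t<X^{(3)}_t\}$, on which $\tau>t$. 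On $\{\tau>T\}$ the definition of $\tau$ gives $X^{(1)}_T<X^{(2)}_T<X^{(3)}_T$, so $C_T\ge 0$ is just the convexity of $g=h(T,\cdot)$ at the three points; on $\{\tau\le T\}$ almost-continuity of $X$ applied to each pair $(X^{(i)},X^{(j)})$ forces either $X^{(1)}_\tau=X^{(2)}_\tau$ or $X^{(2)}_\tau=X^{(3)}_\tau$ at $\tau$, and direct substitution collapses $C_\tau$ to $0$ in each case. Hence $C_t\ge 0$ almost surely on $E$.

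Because the three copies are iid with the law of $X_t$, the inequality $C_t\ge 0$ translates into the convexity inequality \eqref{eqn:convex:conv condition} for $f$ at almost every ordered triple in $\support{X_t}^3$, and the Lipschitz continuity of $f$ from Theorem \ref{thm:exp of lip is lip} upgrades this to every ordered triple in $\support{X_t}$. A standard extension outside $\support{X_t}$ as the pointwise supremum of the affine minorants of $f|_{\support{X_t}}$ with slopes in $[k,K]$ then produces a convex $f:\reals\to\reals$ with $k\le f^\prime\le K$ which still satisfies $f(X_t)=\E{g(X_T)\mid\setsF_t}$ because $X_t\in\support{X_t}$ almost surely. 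The principal obstacle is verifying the martingale property of $C$ together with the associated optional stopping; once those are in hand the almost-continuity hypothesis does exactly the geometric work needed to make $C_{T\wedge\tau}$ pathwise nonnegative.
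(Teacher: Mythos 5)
Your proposal is correct and follows essentially the same route as the paper: the same three-copy coupling, the same combination $C_s$ (identical to the paper's $M_s$ up to rearranging a sign), the same stopping time $\tau$, the same case analysis using almost-continuity to force $C_\tau=0$ when $t<\tau\le T$ and convexity of $g$ when $\tau>T$, and the same linear interpolation/extrapolation outside $\support{X_t}$. The only cosmetic difference is that you justify $1_{\{\tau>t\}}C_t=1_{\{\tau>t\}}\E{C_{T\wedge\tau}\mid\setsF_t}$ by first establishing that each cross-term is a martingale and then invoking optional stopping, whereas the paper reads the stopped identity directly off its stopping-time formulation of the strong Markov property; the content is the same.
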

\begin{proof}
As $k\le g^\prime \le K$, $g$ is Lipschitz continuous. By Theorem \ref{thm:exp of lip is lip} there exists a Lipschitz continuous $f:\reals\rightarrow\reals$ with $k\le f^\prime\le K$ and satisfying equation (\ref{eqn:cond exp of convex func}).

We can extend the probability space by taking the product of three copies of the underlying filtered probability space. Then, by Lemma \ref{lemma:strong markov stays sm in extension}, there exists independent strong Markov processes $X^1,X^2,X^3$ each of which have the same distribution as $X$. As the $X^i$ are strong Markov there exists a measurable $h:[0,T]\times\reals\rightarrow\reals$ such that $h(T,x)=g(x)$ and equation (\ref{eqn:exp:increasing:strong markov coupling}) is satisfied for any stopping time $\tau$, and with $X^i$ in place of $X$ ($i=1,2,3$). Then,
\begin{equation*}
f(X_t)=h(t,X_t)\textrm{ (a.s.)}.
\end{equation*}
We now define the process
\begin{equation*}
M_s = (X^3_s-X^2_s)h(s,X^1_s) + (X^2_s-X^1_s)h(s,X^3_s)+(X^1_s-X^3_s)h(s,X^2_s)
\end{equation*}
and the stopping time
\begin{equation*}
\tau = \inf\left\{s\in[t,\infty):X^1_s\ge X^2_s\textrm{ or }X^2_s\ge X^3_s\right\}. \end{equation*}
In particular, note that $\{\tau>t\}=\{X^1_t<X^2_t<X^3_t\}$.

As the $X^i$ are cadlag martingales adapted to independent filtrations, equation (\ref{eqn:exp:increasing:strong markov coupling}) gives
\begin{eqnalign*}
1_{\{\tau>t\}}X^i_t f(X^j_t) &=& 1_{\{\tau>t\}}\E{X^i_{T\wedge \tau}h(T\wedge \tau,X^j_{T\wedge \tau})\big|\setsF_t}
\end{eqnalign*}
whenever $i\not=j$.
So, as $M$ is a linear combination of such terms,
\begin{equation*}
1_{\{\tau>t\}}M_t = 1_{\{\tau>t\}}\E{M_{T\wedge \tau}|\setsF_t}.
\end{equation*}

Now, if $t<\tau\le T$ then either $X^1_\tau=X^2_\tau$ or $X^3_\tau=X^2_\tau$ (because $X$ is almost-continuous). So, $M_\tau=0$.
On the other hand, if $\tau> T$ then $X^1_T<X^2_T<X^3_T$, so the convexity of $g$ gives $M_T\ge 0$.

In either case this says that $M_{T\wedge \tau}\ge 0$ whenever $\tau>t$, so $1_{\{\tau>t\}}M_t\ge 0$ or, equivalently,
\begin{equation*}
(X^3_t-X^2_t)f(X^1_t) + (X^2_t-X^1_t)f(X^3_t)+(X^1_t-X^3_t)f(X^2_t)\ge 0
\end{equation*}
whenever $X^1_t<X^2_t<X^3_t$ (a.s.).
As $f$ is continuous, this shows that it is convex on the support of $X_t$.

We can extend $f$ to any bounded open interval in the complement of $\support{X_t}$ by linear interpolation. If $\support{X_t}$ is uniformly bounded above then we can extrapolate $f$
linearly with gradient $K$ above the support of $X_t$.
Similarly, if $\support{X_t}$ is bounded below then we extrapolate $f$ linearly with gradient $k$ below the support of $X_t$, which gives a convex function.
\end{proof}

We can extend the previous result to prove that conditional expectations of convex functions give convex functions that are decreasing in time, as was stated in Theorem \ref{thm:exp of conv is conv and dec in t}.

\begin{proof}[Proof of Theorem \ref{thm:exp of conv is conv and dec in t}]
As $g$ is Lipschitz continuous, there exist real numbers $k,K$ such that $k\le g^\prime\le K$.
By Lemma \ref{lemma:exp:exp of conv is conv} there is a function $f:[0,T]\times\reals\rightarrow\reals$ satisfying equation (\ref{eqn:cond exp of conv}) and such that $f(t,x)$ is convex and Lipschitz-continuous in $x$ with derivative satisfying $k\le f\pd{2}\le K$.
By linear interpolation and extrapolation outside the support of $X_t$, we can suppose that $f(t,x)$ is linear in $X$ across each of the connected components of $\reals\setminus\support{X_t}$. Furthermore we can assume that $f\pd{2}(t,x)$ is equal to $K$ whenever $x$ is an upper bound for $\support{X_t}$ and $k$ whenever it is a lower bound.

It only needs to be shown that $f(t,x)$ is decreasing in $t$. For any $s<t\in[0,T]$ we can apply Jensen's inequality,
\begin{equation*}
f(s,X_s) = \E{f(t,X_t)|\setsF_s}\ge f(t,X_s).
\end{equation*}
So $f(s,x)\ge f(t,x)$ for every $x$ in the support of $X_s$.
This inequality extends to the bounded open components of $\reals\setminus\support{X_s}$ as we chose $f(s,x)$ to be linear across these intervals. So, it holds for every $x\in[a,b]$ where $a$ is the infimum and $b$ is the supremum of $\support{X_s}$. If $b<\infty$ and $x>b$ the inequality $f\pd{2}(s,x)=K\ge f\pd{2}(t,x)$ gives
\begin{equation*}
f(s,x)=f(s,b)+K(x-b)\ge f(t,b)+K(x-b)\ge f(t,x).
\end{equation*}
Similarly, if $a>-\infty$ and $x<a$ the inequality $f\pd{2}(s,x)=k\le f\pd{2}(t,x)$ gives
\begin{equation*}
f(s,x)=f(s,a)+k(x-a)\ge f(t,a)+k(x-a)\ge f(t,x)
\end{equation*}
as required.
\end{proof}

\section{Marginal Supports}\label{sec:msupport}

In this section we shall prove a few results concerning the marginal support (Definition \ref{defn:marginal support}) of almost-continuous processes. In particular, we show that the paths of a process are contained in its marginal support (Lemma \ref{lemma:process lies in msupp}), and prove Lemma \ref{lemma:mgale AC iff never jumps past msupp} characterizing almost-continuous martingales in terms of their marginal support.

We start by showing that the marginal support is always Borel measurable.

\begin{lemma}\label{lemma:expression for msupp}
Let $X$ be a real valued stochastic process that is right-continuous in probability. For any real numbers $a<b$ set
\begin{equation*}
S_{a,b}=\left\{t\in\reals_+:(a,b)\cap\support{X_t}=\emptyset\right\}.
\end{equation*}
Then $S_{a,b}$ is Borel measurable and
\begin{equation}\label{eqn:lemma:expression for msupp}
\msupport{X}=\halfplane\setminus\bigcup\left\{S_{a,b}\times(a,b):a<b\in\rats\right\}.
\end{equation}
\end{lemma}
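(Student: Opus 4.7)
The plan is to establish both assertions---the Borel measurability of $S_{a,b}$ and the identity (\ref{eqn:lemma:expression for msupp})---by first proving the elementary characterization that $(a,b)\cap\support{X_t}=\emptyset$ if and only if $\Prob{X_t\in(a,b)}=0$. The forward implication is immediate since $X_t\in\support{X_t}$ almost surely. For the reverse I would use the minimality property defining the support: if $\Prob{X_t\in(a,b)}=0$ then $\support{X_t}\cap(\reals\setminus(a,b))$ is a closed set that still carries $X_t$ almost surely, so by minimality it equals $\support{X_t}$ and hence is disjoint from $(a,b)$.

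Given this characterization, the identity (\ref{eqn:lemma:expression for msupp}) follows by a density argument. A point $(t,x)\in\halfplane$ lies outside $\msupport{X}$ exactly when $x$ is in the open set $\reals\setminus\support{X_t}$, and by density of $\rats$ this happens precisely when there exist rationals $a<x<b$ with $(a,b)\cap\support{X_t}=\emptyset$; that is, $(t,x)\in S_{a,b}\times(a,b)$ for some such $a<b$. Taking the countable union over $a<b\in\rats$ yields the formula, and once the $S_{a,b}$ are known to be Borel this also implies that $\msupport{X}$ is itself Borel.

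For the measurability of $S_{a,b}$, the characterization above rewrites it as $\{t\in\reals_+:\Prob{X_t\in(a,b)}=0\}$, so it suffices to show that $\phi(t):=\Prob{X_t\in(a,b)}$ is a Borel function of $t$. I would pick continuous bounded $g_n:\reals\to[0,1]$ with $g_n\uparrow 1_{(a,b)}$ pointwise. For each such $g$, the right-continuity of $X$ in probability combined with bounded convergence shows that $t\mapsto\E{g(X_t)}$ is right-continuous on $\reals_+$, and hence Borel measurable. Monotone convergence then exhibits $\phi(t)=\lim_n\E{g_n(X_t)}$ as a pointwise increasing limit of Borel functions, so $\phi$ is Borel and $S_{a,b}=\phi^{-1}(\{0\})$ is Borel as required.

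The main obstacle is really the measurability step: one has to get by with only right-continuity in probability rather than, say, joint measurability of the process, which may fail in general. The approximation of $1_{(a,b)}$ by continuous test functions together with bounded convergence is exactly what converts the pointwise right-continuity hypothesis on $X$ into Borel measurability of $\phi$, and everything else in the lemma is routine once this is in place.
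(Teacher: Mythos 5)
Your proof is correct and follows essentially the same route as the paper: the identity comes from decomposing the open complement of $\support{X_t}$ into rational intervals, and measurability of $S_{a,b}$ comes from right-continuity in probability plus bounded convergence applied to continuous test functions. The only cosmetic difference is that the paper uses the single function $g(x)=\min\left((x-a)_+,(b-x)_+\right)$, whose expectation vanishes exactly when $(a,b)\cap\support{X_t}=\emptyset$, whereas you approximate $1_{(a,b)}$ by an increasing sequence $g_n$ and pass to the limit.
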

\begin{proof}
For any real numbers $a<b$ define $f:\reals_+\rightarrow\reals$ by
\begin{equation*}
f(t) = \E{\min\left((X_t-a)_+,(b-X_t)_+\right)}.
\end{equation*}
The right-continuity of $X$ implies that $f$ is also right-continuous, so it is Borel measurable.
Then,
\begin{equation*}
S_{a,b}=\left\{t\in\reals_+:f(t)=0\right\}
\end{equation*}
which is Borel measurable.

We now note that for every $t\ge 0$ the complement of the set $\support{X_t}$ is open, and therefore can be expressed as a union of open intervals $(a,b)$ for $a<b$. As the rational numbers are dense in the reals we can restrict to $a,b\in\rats$,
\begin{equation*}
\reals\setminus\support{X_t}=\bigcup\left\{(a,b): a<b\in\rats, t\in S_{a,b}\right\}.
\end{equation*}
So, for any $(t,x)\in\halfplane$ we get the following implications
\begin{eqnalign*}
(t,x)\not\in\msupport{X}&\iff&
x\not\in\support{X_t}\\
&\iff&x\in\bigcup\left\{(a,b): a<b\in\rats, t\in S_{a,b}\right\}\\
&\iff& (t,x)\in\bigcup\left\{S_{a,b}\times(a,b):a<b\in\rats\right\},
\end{eqnalign*}
which proves equality (\ref{eqn:lemma:expression for msupp}).
\end{proof}
The measurability of $\msupport{X}$ follows immediately.
\begin{lemma}
Let $X$ be a real valued stochastic process which is right-continuous in probability. Then its marginal support is a Borel measurable subset of $\halfplane$.
\end{lemma}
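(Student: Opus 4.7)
The plan is simply to invoke the previous Lemma \ref{lemma:expression for msupp}, which already does all the work. That lemma gives the explicit representation
\begin{equation*}
\msupport{X}=\halfplane\setminus\bigcup\left\{S_{a,b}\times(a,b):a<b\in\rats\right\},
\end{equation*}
and asserts that each set $S_{a,b}\subseteq\reals_+$ is Borel measurable.

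From here the argument is routine. Since $\rats$ is countable, the union on the right is a countable union indexed by pairs $a<b$ in $\rats$. For each such pair, $S_{a,b}$ is Borel in $\reals_+$ and $(a,b)$ is open, hence Borel in $\reals$, so the product $S_{a,b}\times(a,b)$ lies in the product $\sigma$-algebra, which coincides with the Borel $\sigma$-algebra on $\halfplane$ (as $\reals_+$ and $\reals$ are second countable). A countable union of Borel sets is Borel, and the complement of a Borel set is Borel, so $\msupport{X}$ is Borel measurable.

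I do not anticipate any obstacle: all of the content is contained in Lemma \ref{lemma:expression for msupp}, and the present statement is essentially a formal restatement of the fact that the right-hand side of the displayed identity there is a Borel set. The only minor point worth mentioning explicitly in the proof is that the product Borel $\sigma$-algebra on $\reals_+\times\reals$ agrees with the Borel $\sigma$-algebra coming from the product topology, which is standard for second countable spaces.
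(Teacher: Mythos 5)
Your proof is correct and matches the paper's own argument, which likewise just cites Lemma \ref{lemma:expression for msupp} and notes that the right-hand side of the displayed identity is Borel. Spelling out the countable-union and product-$\sigma$-algebra details is fine but not required.
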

\begin{proof}
This follows immediately from equality (\ref{eqn:lemma:expression for msupp}).
\end{proof}

We now show that the paths of any right-continuous process are contained in its marginal support.

\begin{lemma}\label{lemma:process lies in msupp}
Let $X$ be a right-continuous real valued stochastic process. Then, with probability $1$, we have
\begin{equation*}
\left\{(t,X_t):t\in\reals_+\right\}\subseteq\msupport{X}.
\end{equation*}
\end{lemma}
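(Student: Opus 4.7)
My approach starts by decomposing $\msupport{X}^c$ using Lemma \ref{lemma:expression for msupp}: we have $\msupport{X}^c=\bigcup_{a<b\in\rats}(S_{a,b}\times(a,b))$, so the event $\{\omega:\exists t\ge 0,\ (t,X_t(\omega))\notin\msupport{X}\}$ is the countable union $\bigcup_{a<b\in\rats}N_{a,b}$ with $N_{a,b}:=\{\omega:\exists t,\ t\in S_{a,b}\text{ and }X_t(\omega)\in(a,b)\}$. It thus suffices to show that $\Prob{N_{a,b}}=0$ for each pair of rationals $a<b$; fix such a pair.

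For every rational $q\in S_{a,b}$, the definition of $S_{a,b}$ gives $\Prob{X_q\in(a,b)}=0$, and a countable union yields a full-measure event on which $X_q\notin(a,b)$ for every $q\in\rats_+\cap S_{a,b}$. On this event, if some $t\in S_{a,b}$ had $X_t(\omega)\in(a,b)$, then by right-continuity of $X$ there would exist $\epsilon>0$ with $X_s(\omega)\in(a,b)$ throughout $[t,t+\epsilon)$, forcing every rational $q$ in that interval to lie outside $S_{a,b}$.

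I would then exclude such $t$ by combining two further observations. First, by Fubini, $\E{\lambda(S_{a,b}\cap V_{a,b}(\omega))}=\int_{S_{a,b}}\Prob{X_s\in(a,b)}\,ds=0$, where $V_{a,b}(\omega):=\{s:X_s(\omega)\in(a,b)\}$; this rules out any candidate $t$ that admits a right-neighborhood contained in $S_{a,b}$. Second, the set of ``right-isolated'' points of $S_{a,b}$ -- those $t\in S_{a,b}$ with $(t,t+\eta)\cap S_{a,b}=\emptyset$ for some $\eta>0$ -- is countable: denoting by $\eta_t>0$ such a gap length, any two such points $t_1<t_2$ satisfy $t_2\ge t_1+\eta_{t_1}$, so we can pick distinct rationals $q_{t_1}\in(t_1,t_1+\eta_{t_1})$ with $q_{t_1}<t_2<q_{t_2}$. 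One can therefore add ``$X_s\notin(a,b)$ for every right-isolated $s$'' to the full-measure event just as in the rational case. The remaining possibility, in which $t$ is a right-accumulation point of both $S_{a,b}$ and its complement, would be handled by iterating the previous two observations on bad points found in $(t,t+\epsilon)\cap S_{a,b}$, exploiting the right-closure of $S_{a,b}$ (which follows from right-continuity of $f(t)=\E{\min((X_t-a)_+,(b-X_t)_+)}$) together with the right-openness of $V_{a,b}(\omega)$ inherited from right-continuity of $X$.

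The main obstacle is this final ``mixed accumulation'' case, where carefully combining the right-closure of $S_{a,b}$ with the right-openness of $V_{a,b}(\omega)$ requires some finesse; once it is resolved for each pair $a<b$, a countable union over rational pairs completes the proof.
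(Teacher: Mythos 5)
Your reductions are fine as far as they go: excluding the countably many rationals of $S_{a,b}$, excluding the right-isolated points of $S_{a,b}$ (your countability argument for these is correct), and the Fubini argument giving $\lambda(S_{a,b}\cap V_{a,b}(\omega))=0$ almost surely, are all valid. But the ``mixed accumulation'' case is not a technicality that iteration will dispose of --- it is the whole difficulty, and the facts you have assembled do not suffice to produce a contradiction. After your exclusions, a hypothetical bad time $t$ is a point of $S_{a,b}\cap V_{a,b}(\omega)$ that is irrational, not right-isolated in $S_{a,b}$, and around which $S_{a,b}\cap[t,t+\epsilon)$ is Lebesgue-null; iterating your two observations on the bad points found in $(t,t+\epsilon)\cap S_{a,b}$ only produces further points with exactly the same three properties, so the iteration never terminates in a contradiction. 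Nor can any purely structural argument close it: a Cantor-type null set translated so as to avoid the rationals, with the left endpoints of its gaps deleted, is a nonempty, measure-zero, right-closed set containing no rationals in which every point is a right-accumulation point of the set. So right-closedness of $S_{a,b}$, right-openness of $V_{a,b}(\omega)$ and all of your exclusions are jointly consistent with the bad set being nonempty. What is missing is a device that reduces the uncountable family of candidate times to a \emph{countable, deterministic} family before the fixed-time fact $\Prob{X_t\in(a,b)}=0$ (for $t\in S_{a,b}$) is invoked; the pathwise set $V_{a,b}(\omega)$ cannot supply this, because the left endpoints of its components are random times rather than fixed ones.

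This is precisely where the paper's proof takes a different route. It argues by contradiction: the Section Theorem extracts a single random time $\tau$ with $(\tau,X_\tau)\in S_{a,b}\times(a,b)$ with positive probability; then with $\sigma=\inf\{t\ge\tau:X_t\notin(a,b)\}$ one forms the \emph{deterministic} set $U=\{t:\Prob{\tau<t<\sigma}>0\}$, which is open, disjoint from $S_{a,b}$, and almost surely contains the interval $(\tau,\sigma)$. Consequently $\tau$ must coincide with a left endpoint of one of the countably many components of $U$, hence has an atom at some fixed $t\in S_{a,b}$, and only then is $\Prob{X_t\in\support{X_t}}=1$ used to obtain the contradiction. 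Unless you supply an analogous mechanism (a Section/Debut-type reduction to a single random time together with a deterministic countable candidate set), your proposal has a genuine gap at the step you yourself identify as the main obstacle.
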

\begin{proof}
We shall use proof by contradiction, so start by supposing that the statement is not true. Then, as the marginal support is Borel measurable there must exist a random time $\tau$ such that
\begin{equation*}
\Prob{(\tau,X_\tau)\not\in\msupport{X}}>0.
\end{equation*}
This follows from the Section Theorem (see \cite{Cohn} Corollary 8.5.4 or \cite{HeWangYan} Lemma 4.3).
Using the notation of Lemma \ref{lemma:expression for msupp}, it follows from equality (\ref{eqn:lemma:expression for msupp}) that there exists rational numbers $a<b$ such that
\begin{equation*}
\Prob{(\tau,X_\tau)\in S_{a,b}\times(a,b)} >0.
\end{equation*}
Without loss of generality, we can suppose that $(\tau,X_\tau)\in S_{a,b}\times(a,b)$ whenever $\tau<\infty$. We shall show that there are only countably many possible values that $\tau$ can take.

Define the random time
\begin{equation*}
\sigma = \inf\left\{t\in\reals_+: t\ge \tau, X_t\not\in(a,b)\right\}
\end{equation*}
and the set
\begin{equation*}
U = \left\{t\in\reals_+:\Prob{\tau<t<\sigma}>0\right\}.
\end{equation*}
For any $t\in U$ we have $\Prob{X_t\in(a,b)}>0$ so $(a,b)\cap\support{X_t}\not=\emptyset$. Therefore, $U$ and $S_{a,b}$ are disjoint. As $\tau\in S_{a,b}$ whenever $\tau<\infty$ this shows that
\begin{equation}
\Prob{\tau\in U} =0.\label{eqn:proof:process lies in msupp 1}
\end{equation}
Also, $U$ is open. To see this, choose any $t\in U$ and a sequence $(t_n)_{n\in\nat}$ such that $t_n\rightarrow t$. Then bounded convergence for expectations gives
\begin{equation*}
\liminf_{n\rightarrow\infty}\Prob{\tau<t_n<\sigma}\ge\Prob{\tau<t<\sigma}>0,
\end{equation*}
so $t_n\in U$ for large $n$. So we see that $U$ is indeed open. Therefore $U$ is a union of disjoint open intervals. We can write
\begin{equation*}
U = \bigcup_{n=1}^\infty (u_n,v_n)
\end{equation*}
where $u_n,v_n\in\reals_+\cup\{\infty\}$ and the intervals $(u_n,v_n)$ are disjoint.

Now, if $s<t$ are any times such that $\Prob{\tau<s<t<\sigma}>0$ then $(s,t)\subseteq U$. So, with probability one,
\begin{eqnalign*}
\srbrack \tau,\sigma\slbrack &=& \bigcup\left\{(s,t): s,t\in\rats_+, \tau<s<t<\sigma\right\}\\
&\subseteq& \bigcup\left\{(s,t):s,t\in\rats_+, \Prob{\tau<s<t<\sigma}>0\right\}\\
&\subseteq& U.
\end{eqnalign*}
Together with equation (\ref{eqn:proof:process lies in msupp 1}) this shows that whenever $\tau$ is finite then it is a left limit point of an interval in $U$ but is not in $U$, with probability one. So,
\begin{equation*}
\Prob{\tau=\infty\textrm{ or }\tau=u_n\textrm{ for some }n\in\nat}=1.
\end{equation*}
As promised, we have shown that there are only countably many possible values that $\tau$ can take. So there is a $t\in\reals_+$ such that
\begin{equation*}
\Prob{\tau=t}>0.
\end{equation*}
Finally this gives
\begin{eqnalign*}
0=\Prob{X_t\not\in\support{X_t}}\ge\Prob{\tau=t}>0,
\end{eqnalign*}
which is the required contradiction.
\end{proof}

In the case that $X$ is continuous in probability (but not necessarily with continuous paths) it is easy to extend this result to show that the left limits of the process are also in the marginal support.

\begin{lemma}\label{lemma:uniqueness:left lim of x is in msupp}
Let $X$ be a cadlag real valued stochastic process which is left-continuous in probability. Then, with probability $1$, we have
\begin{equation*}
\left\{(t,X_{t-}):t\in\reals_+\right\}\subseteq\msupport{X}.
\end{equation*}
\end{lemma}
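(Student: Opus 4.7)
The plan is to follow the same contradiction strategy as in Lemma \ref{lemma:process lies in msupp}, using only one new fact at the end: the hypothesis that $X$ is left-continuous in probability, combined with the cadlag property, forces $X_t=X_{t-}$ almost surely at every fixed time $t$.

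Since $X$ is cadlag, the map $(t,\omega)\mapsto X_{t-}$ is measurable, so if the conclusion fails the Section Theorem yields a random time $\tau$ with $\Prob{(\tau,X_{\tau-})\not\in\msupport{X}}>0$. By Lemma \ref{lemma:expression for msupp}, there are rationals $a<b$ with $\Prob{(\tau,X_{\tau-})\in S_{a,b}\times(a,b)}>0$, and after redefining $\tau=\infty$ off this event I may assume $X_{\tau-}\in(a,b)$ and $\tau\in S_{a,b}$ whenever $\tau<\infty$.

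Next I would set
\begin{equation*}
\sigma = \sup\left\{s\in[0,\tau) : X_s\not\in(a,b)\right\},
\end{equation*}
noting that $\sigma<\tau$ and $X_s\in(a,b)$ for every $s\in(\sigma,\tau)$ on $\{\tau<\infty\}$, because $X_{\tau-}\in(a,b)$ and $(a,b)$ is open. Setting $U=\{t\in\reals_+ : \Prob{\sigma<t<\tau}>0\}$, the arguments of the previous lemma apply essentially verbatim to show that $U$ is open, that $U\cap S_{a,b}=\emptyset$ (so $\tau\not\in U$ on $\{\tau<\infty\}$), and that $\srbrack\sigma,\tau\slbrack\subseteq U$ almost surely after discarding the countably many null events of the form $\{\sigma<s<t<\tau\}$ indexed by rationals $s<t$ whose probability is zero. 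Writing $U=\bigcup_n(u_n,v_n)$ as a disjoint union of maximal open intervals, any $s\in(\sigma,\tau)\cap(u_n,v_n)$ forces $v_n\ge\tau$ (otherwise $v_n\in(\sigma,\tau)\subseteq U$ contradicts $v_n\not\in U$), and combined with $\tau\le v_n$ and $\tau\not\in U$ this yields $v_n=\tau$. Hence $\tau$ takes only countably many finite values, and some deterministic $t\in\reals_+$ satisfies $\Prob{\tau=t}>0$.

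The contradiction then closes by the new input. For any sequence $s_n\uparrow t$, the cadlag property gives $X_{s_n}\to X_{t-}$ pointwise while left-continuity in probability gives $X_{s_n}\to X_t$ in probability, so the two limits agree and $X_t=X_{t-}$ almost surely. Therefore $\Prob{X_t\in(a,b)}\ge\Prob{\tau=t}>0$, contradicting $t\in S_{a,b}$. The one delicate step is the pathwise-to-deterministic inclusion $\srbrack\sigma,\tau\slbrack\subseteq U$ almost surely, which requires the same careful rational-pair argument already used in Lemma \ref{lemma:process lies in msupp}; everything else is bookkeeping around the replacement of $X_\tau$ by $X_{\tau-}$.
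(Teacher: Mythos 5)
Your argument is correct, but it takes a genuinely different route from the paper. The paper disposes of this lemma in three lines by time reversal: it applies Lemma \ref{lemma:process lies in msupp} to $Y_t=X_{(n-t)_+-}$, which is right-continuous because the left-limit path $s\mapsto X_{s-}$ is left-continuous, obtaining $X_{t-}\in\support{X_{t-}}$ for all $t\le n$, and then uses left-continuity in probability once, to identify $\support{X_{t-}}$ with $\support{X_t}$. You instead rerun the whole section-theorem/contradiction machinery directly on the left-limit process, with the natural modification that the paper's first-exit time after $\tau$ is replaced by a last-exit time $\sigma$ before $\tau$, so that $\tau$ becomes a right endpoint $v_n$ of a component of the open set $U$ rather than a left endpoint; left-continuity in probability then enters only at the very end, to force $X_t=X_{t-}$ a.s.\ at the fixed time $t$ and contradict $t\in S_{a,b}$. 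All the steps check out: $\sigma<\tau$ and $X_s\in(a,b)$ on $(\sigma,\tau)$ follow from $X_{\tau-}\in(a,b)$ and the existence of left limits, the openness of $U$, its disjointness from $S_{a,b}$, the a.s.\ inclusion $\srbrack\sigma,\tau\slbrack\subseteq U$ via rational pairs, and the identification $\tau=v_n$ are all sound. What the paper's route buys is brevity and the reuse of an already-proved lemma; what yours buys is a self-contained argument that avoids verifying regularity of the reversed process, at the cost of repeating the argument and of needing measurability of events such as $\{\sigma<t<\tau\}$ for the last-exit time $\sigma$ (a measurable-projection/completion point, but no worse than the paper's own unremarked treatment of its first-exit time in Lemma \ref{lemma:process lies in msupp}).
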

\begin{proof}
We shall apply Lemma \ref{lemma:process lies in msupp} to this case simply by reversing the time direction. So pick any $n\in\nat$ and define the process
\begin{equation*}
Y_t = X_{(n-t)_+-}.
\end{equation*}
Lemma \ref{lemma:process lies in msupp} says that with probability one, for every $t\in\reals_+$ with $t\le n$,
\begin{equation*}
X_{t-}=Y_{n-t}\in\support{Y_{n-t}}=\support{X_{t-}}=\support{X_t},
\end{equation*}
as $X$ is left-continuous in probability. The result now follows by letting $n$ go to infinity.
\end{proof}

We are only interested in real valued processes here, but the above proofs would generalise quite easily to processes taking values in any Polish space.

We shall now move on to describe the `almost-continuous' property of one dimensional processes in terms of the marginal support. In particular, we shall prove Lemma \ref{lemma:mgale AC iff never jumps past msupp}.
First, we shall require the following useful, but very simple lemma.

\begin{lemma}\label{lemma:simple expectation at independent time}
Let $A$ be a bounded and jointly measurable stochastic process. Then defining
\begin{eqnalign*}
&&f:\reals_+\rightarrow\reals,\\
&&f(t) = \E{A_t},
\end{eqnalign*}
$f$ is Borel measurable. Furthermore if $\tau:\Omega\rightarrow\reals_+\cup\{\infty\}$ is a random time independent of $A$ then
\begin{equation*}
\E{1_{\{\tau<\infty\}}A_\tau}=\E{1_{\{\tau<\infty\}}f(\tau)}.
\end{equation*}
\end{lemma}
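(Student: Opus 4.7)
The plan is to prove both assertions by Fubini's theorem. Since $A$ is bounded and jointly $\mathcal{B}(\reals_+)\otimes\setsF$-measurable, Fubini immediately yields that $f(t)=\int A_t\, d\PP$ is Borel measurable.

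For the identity I would exploit the independence of $\tau$ and $A$ via conditioning. Let $\mu$ denote the law of $\tau$ on $\reals_+\cup\{\infty\}$. Since $\tau$ is independent of $\sigma(A)$, its regular conditional distribution given $\sigma(A)$ is still $\mu$; but given $\sigma(A)$ the map $t\mapsto A_t(\omega)$ is deterministic, so
\[
\E{1_{\{\tau<\infty\}} A_\tau \mid \sigma(A)} = \int 1_{\{t<\infty\}} A_t(\omega)\, d\mu(t) \quad \text{a.s.}
\]
Taking expectations and applying Fubini once more yields
\[
\E{1_{\{\tau<\infty\}} A_\tau} = \int 1_{\{t<\infty\}} f(t)\, d\mu(t) = \E{1_{\{\tau<\infty\}} f(\tau)}.
\]

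The main obstacle will be the measurability bookkeeping that justifies the conditioning step, and to bypass regular conditional distributions entirely I would instead run a monotone class argument. The family of bounded $\mathcal{B}\otimes\sigma(A)$-measurable $g(t,\omega)$ for which $\E{g(\tau,\cdot)}=\int\E{g(t,\cdot)}\, d\mu(t)$ is a vector space closed under bounded monotone limits, and contains all products $h(t)k(\omega)$ with $h$ bounded Borel and $k$ bounded $\sigma(A)$-measurable (since by independence $\E{h(\tau)k}=\E{h(\tau)}\E{k}$). The functional monotone class theorem then delivers the identity for all bounded $\mathcal{B}\otimes\sigma(A)$-measurable integrands, in particular for $g(t,\omega)=1_{\{t<\infty\}}A_t(\omega)$, which is jointly measurable and $\sigma(A)$-measurable in $\omega$ for each fixed $t$.
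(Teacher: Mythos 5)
Your monotone class argument is essentially the paper's own proof: the paper runs the same functional monotone class argument over bounded $\mathcal{B}\otimes\sigma(A)$-measurable processes, using independence on the product-form generators $1_S1_{\{t\ge s\}}$ (with $S$ a $\sigma(A)$-measurable set) in place of your $h(t)k(\omega)$, and handles the Borel measurability of $f$ along the way rather than by a separate Fubini appeal. The only step you leave implicit --- that the jointly measurable $A$ itself belongs to the class of $\mathcal{B}\otimes\sigma(A)$-measurable processes, for which ``$\sigma(A)$-measurable in $\omega$ for each fixed $t$'' is not quite sufficient --- is asserted with the same brevity in the paper, so the two proofs coincide.
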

\begin{proof}
First choose any $A$-measurable set $S$ and time $s\in\reals_+$ and define the process $X$ by
\begin{equation*}
X_t=1_S1_{\{t\ge s\}}.
\end{equation*}
If we set
\begin{equation*}
g(t)=\E{X_t}=\E{1_S}1_{\{s\le t\}}
\end{equation*}
then $g$ is Borel measurable and the independence of $\tau$ and $S$ implies
\begin{equation*}
\E{1_{\{\tau<\infty\}}X_\tau}
=\E{\E{1_S}1_{\{s\le \tau<\infty\}}}
= \E{1_{\{\tau<\infty\}}g(\tau)}.
\end{equation*}
By the monotone class lemma this extends to any bounded $A$-measurable process $X$, and in particular applies to the case where $X=A$ and $g=f$.
\end{proof}

A simple corollary is that two independent processes that are continuous in probability cannot jump simultaneously.

\begin{corollary}\label{cor:no simultaneous jumps}
Let $Y,Z$ be cadlag real valued stochastic processes such that $Y$ is continuous in probability. Then,
\begin{equation*}
\Prob{\exists t\in\reals_+\textrm{ s.t. }Y_{t-}\not=Y_t\textrm{ and }Z_{t-}\not=Z_t}=0.
\end{equation*}
\end{corollary}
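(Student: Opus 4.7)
The plan is to apply Lemma \ref{lemma:simple expectation at independent time} to the bounded jointly measurable process $A_t=1_{\{Y_{t-}\not=Y_t\}}$, using as the independent random times a measurable enumeration of the jumps of $Z$. (The statement as written implicitly requires $Y$ and $Z$ to be independent, as announced in the sentence preceding the corollary.) Since $Y$ is cadlag, both $(t,\omega)\mapsto Y_t(\omega)$ and $(t,\omega)\mapsto Y_{t-}(\omega)$ are jointly measurable, hence so is $A$, which is bounded by $1$. The function $f(t)=\E{A_t}=\Prob{Y_{t-}\not=Y_t}$ is identically zero, since continuity in probability of $Y$ forces $Y_{t-}=Y_t$ almost surely at every fixed $t$.

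Next I would enumerate the jumps of $Z$ measurably. For each rational $\epsilon>0$ and each $N\in\nat$, the set $\{t\in[0,N]:|Z_t-Z_{t-}|>\epsilon\}$ is finite by the cadlag property, and can be ordered as $\tau_1^{\epsilon,N}<\tau_2^{\epsilon,N}<\cdots$, with $\tau_k^{\epsilon,N}=\infty$ once the list is exhausted. Each $\tau_k^{\epsilon,N}$ is a $\sigma(Z)$-measurable random time, so by the independence of $Y$ and $Z$ it is independent of $A$. Lemma \ref{lemma:simple expectation at independent time} then gives
\begin{equation*}
\E{1_{\{\tau_k^{\epsilon,N}<\infty\}}A_{\tau_k^{\epsilon,N}}}=\E{1_{\{\tau_k^{\epsilon,N}<\infty\}}f(\tau_k^{\epsilon,N})}=0
\end{equation*}
for every $\epsilon$, $N$ and $k$.

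Finally I would take a countable union. Any time $t$ at which both $Y$ and $Z$ jump must equal some $\tau_k^{\epsilon,N}<\infty$ (for appropriate rational $\epsilon>0$, $N\in\nat$ and $k\in\nat$) while simultaneously satisfying $A_t=1$; each of these events is null, and their union over the countable index set is therefore null, which is the claim. The only mildly technical point is the measurable enumeration of the jump times of $Z$ as random times built purely from $Z$ (so as to inherit independence from $A$); with that in place the argument reduces to invoking the lemma with $f\equiv 0$.
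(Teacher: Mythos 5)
Your proof is correct and follows essentially the same route as the paper: apply Lemma \ref{lemma:simple expectation at independent time} with $A_t=1_{\{Y_{t-}\not=Y_t\}}$ and $f\equiv 0$ (by continuity in probability) at a countable family of $Z$-measurable random times exhausting the jumps of $Z$, then take a countable union. The only difference is that you construct the exhausting sequence by hand (jumps of size exceeding $\epsilon$ on $[0,N]$), whereas the paper simply cites He, Wang and Yan (Theorem 3.32) for its existence; you also correctly note the independence of $Y$ and $Z$ that the corollary's statement leaves implicit.
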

\begin{proof}
As $Z$ is cadlag, there exist $Z$-measurable random times $(\tau_n )_{n\in\nat}$ such that 
$\cup_{n\in\nat}\slbrack \tau_n\srbrack$ contains all the jump times of $Z$ almost-surely (see \cite{HeWangYan} Theorem 3.32). Then, Lemma \ref{lemma:simple expectation at independent time} with $A_t=1_{\{Y_t\not= Y_{t-}\}}$ gives
\begin{eqnalign*}
&&\Prob{\exists t\in\reals_+\textrm{ s.t. }Y_{t-}\not=Y_t\textrm{ and }Z_{t-}\not=Z_t}\\
&&\le\sum_{n\in\nat}\Prob{Y_{\tau_n-}\not=Y_{\tau_n}}
=\sum_{n\in\nat}\E{A_{\tau_n}}= 0.\ \ \ \ \ \ \qedhere
\end{eqnalign*}
\end{proof}

We can now prove the following lemma which gives a necessary and sufficient condition for the process $X$ to never jump past points in its marginal support. In particular, it implies that in the case where $X$ is almost-continuous then it cannot jump past points in its marginal support. However, the converse statement is not strong enough to say that $X$ will be almost continuous. If $Y$ and $Z$ are independent copies of $X$ then the second condition below says that $Y$ cannot jump from stricly below $Z$ to strictly above it. Unfortunately it does not rule out the possibility that $Y$ can approach $Z$ from below and then jump past it (so that $Y_{t-}=Z_{t-}=Z_t<Y_t$) which would contradict almost-continuity. In order to prove Lemma \ref{lemma:mgale AC iff never jumps past msupp} we will need to make use of the martingale property.

\begin{lemma}\label{lemma:never crosses MSupp cond}
If $X$ is a real valued process which is continuous in probability then the following are equivalent.
\begin{enumerate}
\item The set
\begin{equation*}
\left\{(t,x)\in\halfplane:X_{t-}<x<X_t\right\}.
\end{equation*}
is disjoint from $\msupport{X}$ with probability one.
\item Given two independent cadlag processes $Y$ and $Z$, each with the same distribution as $X$, then
\begin{equation*}
\Prob{\exists t\in\reals_+ {\rm\ s.t.\ } Y_{t-}<Z_t<Y_t}=0
\end{equation*}
\end{enumerate}
\end{lemma}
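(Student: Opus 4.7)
The plan is to prove the two implications separately. Direction $(1)\Rightarrow(2)$ will be essentially immediate from Lemma \ref{lemma:process lies in msupp}, while $(2)\Rightarrow(1)$ will require a rational-discretization argument together with the Section Theorem and Lemma \ref{lemma:simple expectation at independent time}.

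For $(1)\Rightarrow(2)$ I take any independent cadlag copies $Y,Z$ of $X$. Since $Y$ has the same distribution as $X$, condition (1) applied to $Y$ gives that $\{(t,x):Y_{t-}<x<Y_t\}$ is almost surely disjoint from $\msupport{Y}=\msupport{X}$. Meanwhile, Lemma \ref{lemma:process lies in msupp} applied to $Z$ gives $(t,Z_t)\in\msupport{Z}=\msupport{X}$ for all $t\in\reals_+$, almost surely. If $Y_{t-}<Z_t<Y_t$ were to hold at some $t$, then $(t,Z_t)$ would lie in both sets, a contradiction.

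For $(2)\Rightarrow(1)$ I argue the contrapositive. Suppose (1) fails, so with positive probability there is some $(t,x)\in\halfplane$ with $X_{t-}<x<X_t$ and $x\in\support{X_t}$. For such a point one can pick rationals $a<b$ with $X_{t-}\le a<x<b\le X_t$, and the failure can therefore be expressed as a countable union over rationals $a<b$ of the events
\[
E_{a,b}=\left\{\exists t\in\reals_+:X_{t-}\le a<b\le X_t,\ (a,b)\cap\support{X_t}\ne\emptyset\right\}.
\]
These sets are jointly measurable in $(t,\omega)$: the pathwise condition $X_{t-}\le a<b\le X_t$ is jointly measurable since $X$ is cadlag, and by Lemma \ref{lemma:expression for msupp} the condition $(a,b)\cap\support{X_t}\ne\emptyset$ is Borel in $t$ (it is the complement of $S_{a,b}$). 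So there exist rationals $a<b$ with $\Prob{E_{a,b}}>0$, and the Section Theorem furnishes a random time $\tau$ with $\Prob{\tau<\infty}>0$ such that on $\{\tau<\infty\}$ one has $X_{\tau-}\le a<b\le X_\tau$ and $(a,b)\cap\support{X_\tau}\ne\emptyset$.

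Now I lift to a product probability space on which $Y:=X$ is paired with an independent copy $Z$ of $X$; then $\tau$ depends only on $Y$ and is independent of $Z$. Lemma \ref{lemma:simple expectation at independent time} applied to $A_t=1_{\{Z_t\in(a,b)\}}$, with $f(t)=\Prob{Z_t\in(a,b)}$, yields
\[
\Prob{\tau<\infty,\ Z_\tau\in(a,b)}=\E{1_{\{\tau<\infty\}}f(\tau)}.
\]
On $\{\tau<\infty\}$ the open interval $(a,b)$ meets $\support{Z_\tau}=\support{X_\tau}$, so by definition of the support $f(\tau)>0$ there. The right-hand side is thus strictly positive, and on the resulting positive-probability event one has $Y_{\tau-}\le a<Z_\tau<b\le Y_\tau$, contradicting condition (2).

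The main obstacle is the measurable-selection step: to turn a hypothetical failure of (1) into a useful random time one must first pass to rational thresholds $(a,b)$ where Lemma \ref{lemma:expression for msupp} supplies the needed Borel measurability in $t$; once that is done, Lemma \ref{lemma:simple expectation at independent time} handles the independence bookkeeping so cleanly that the contradiction comes out with essentially no extra work.
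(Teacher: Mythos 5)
Your proof is correct, but it is organized differently from the paper's. The paper establishes both implications at once through a single chain of equivalences: statement (2) is first reduced to the rational-threshold events $\{\exists t: Y_{t-}<a<Z_t<b<Y_t\}$, the jump times of $Y$ are exhausted by countably many $Y$-measurable random times (He--Wang--Yan, Theorem 3.32), and Lemma \ref{lemma:simple expectation at independent time} converts each such event into the condition $f(t)=\Prob{a<X_t<b}=0$, i.e.\ $(a,b)\cap\support{X_t}=\emptyset$, at those times --- which is exactly statement (1). Your backward direction $(2)\Rightarrow(1)$ is the same idea run in contrapositive form, except that you locate the offending random time with the Section Theorem applied to the bad set rather than via the covering of the jump times of $Y$; either device works, and both appear elsewhere in the paper. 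Your forward direction $(1)\Rightarrow(2)$ is genuinely different and arguably cleaner: instead of rerunning the independence computation, you invoke Lemma \ref{lemma:process lies in msupp} to place $(t,Z_t)$ in $\msupport{X}$ for all $t$ simultaneously and conclude immediately; this buys a short conceptual argument at the price of relying on that lemma (which is available, having been proved earlier in the section, so there is no circularity). The only points worth making explicit are measure-theoretic: the events of the form $\{\exists t:\dots\}$ are projections of jointly measurable sets, hence only universally measurable, with probabilities determined by the laws of $Y$ and $Z$ after completion; and the section $\tau$ should be chosen $\sigma(X)$-measurable (up to a null set) so that its lift to the product space is genuinely independent of $Z$ before Lemma \ref{lemma:simple expectation at independent time} is applied. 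These are the same conventions the paper uses implicitly, so no gap results.
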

\begin{proof}
We first note that the second statement above is equivalent to stating that
\begin{equation}\label{eqn:pf:never cross MSupp cond:1}
\Prob{\exists t\in\reals_+ {\rm\ s.t.\ } Y_{t-}<a<Z_t<b<Y_t}=0
\end{equation}
for all real $a<b$. Let us fix any $a<b$.

As $Y$ is cadlag, there exist $Y$-measurable random times $(\tau_n)_{n\in\nat}$ such that $\cup_{n\in\nat}\slbrack \tau_n\srbrack$ contains all the jump times of $Y$ almost-surely (see \cite{HeWangYan} Theorem 3.32). So equation (\ref{eqn:pf:never cross MSupp cond:1}) is equivalent to saying that, for every $Y$-measurable random time $\tau$,
\begin{equation*}
\Prob{\tau<\infty{\rm\ and\ }Y_{\tau-}<a<Z_\tau<b<Y_\tau}=0.
\end{equation*}
Now define $f:\reals_+\rightarrow\reals$ by
\begin{equation*}
f(t) = \Prob{a<X_t<b}=\Prob{a<Z_t<b}.
\end{equation*}
Setting $A_t = 1_{\{a<Z_t<b\}}$ and
\begin{equation*}
\sigma=\left\{
\begin{array}{ll}
\tau,&\textrm{if $\tau<\infty$ and $Y_{\tau-}<a<b<Y_\tau$},\\
\infty,&\textrm{otherwise},
\end{array}
\right.
\end{equation*}
we can apply Lemma \ref{lemma:simple expectation at independent time},
\begin{eqnalign*}
\Prob{\tau<\infty{\rm\ and\ }Y_{\tau-}<a<Z_\tau<b<Y_\tau} &=& \E{1_{\{\sigma<\infty\}}A_\sigma}\\
&=&\E{1_{\{\sigma<\infty\}}f(\sigma)}.
\end{eqnalign*}
This term is zero if and only if $f(\sigma)=0$ whenever $\sigma<\infty$ (a.s.).
So, equation (\ref{eqn:pf:never cross MSupp cond:1}) is equivalent to saying that for every $Y$-measurable random time $\tau$ then,
\begin{equation*}
\Prob{\tau<\infty{\rm,\ }Y_{\tau-}<a<b<Y_\tau{\rm\ and\ } f(\tau)>0}=0.
\end{equation*}
However, as we noted above, the jump times of $Y$ are contained in $\cup_{n\in\nat}\slbrack \tau_n\srbrack$, so this is equivalent to saying that
\begin{eqnalign*}
\Prob{\exists t\in\reals_+{\rm\ s.t.\ }Y_{t-}<a<b<Y_t{\rm\ and\ }f(t)>0} &=&\\
\Prob{\exists t\in\reals_+{\rm\ s.t.\ }X_{t-}<a<b<X_t{\rm\ and\ }f(t)>0 } &=&0.
\end{eqnalign*}
Noting that $f(t)>0$ if and only if $(a,b)\cap\support{X_t}\not=\emptyset$, we see that equation (\ref{eqn:pf:never cross MSupp cond:1}) is therefore equivalent to
\begin{equation}\label{eqn:pf:never cross MSupp cond:2}
\Prob{\exists (t,x)\in\msupport{X}{\rm\ s.t.\ }X_{t-}<a<x<b<X_t}=0.
\end{equation}
However, saying that equation (\ref{eqn:pf:never cross MSupp cond:2}) is true for all real $a<b$ is equivalent to the first statement of the lemma, so the equivalence of equations (\ref{eqn:pf:never cross MSupp cond:1}) and (\ref{eqn:pf:never cross MSupp cond:2}) proves the result.
\end{proof}

This allows us to prove Lemma \ref{lemma:mgale AC iff never jumps past msupp} in one direction. Note that the result in this direction makes no use of the martingale property -- that will be necessary when we prove the converse.

\begin{corollary}\label{cor:AC implies never crosses MSupp}
Let $X$ be a real valued almost continuous process. Then the set
\begin{equation*}
\left\{(t,x)\in\halfplane:X_{t-}<x<X_t\textrm{ or }X_t<x<X_{t-}\right\}
\end{equation*}
is disjoint from the marginal support of $X$, with probability one.
\end{corollary}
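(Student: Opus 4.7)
My plan is to apply Lemma \ref{lemma:never crosses MSupp cond} to reduce the claim to a probabilistic condition on two independent copies of $X$. Because the defining property of almost-continuity is symmetric in $Y,Z$ and invariant under $X\mapsto -X$ (swapping $Y,Z$, which are iid, converts the defining event for $-X$ into the defining event for $X$), the process $-X$ is also almost-continuous. It therefore suffices to handle the upward-jump set $\{(t,x):X_{t-}<x<X_t\}$; the downward-jump half will follow by applying the same argument to $-X$ and noting that $\msupport{-X}$ is just the reflection of $\msupport{X}$. By Lemma \ref{lemma:never crosses MSupp cond}, the task reduces to showing
\begin{equation*}
\Prob{\exists t\in\reals_+ : Y_{t-} < Z_t < Y_t}=0
\end{equation*}
whenever $Y,Z$ are independent cadlag processes each distributed as $X$.

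The central step I would take is to show that, up to a null set, this event is contained in the countable union
\begin{equation*}
\bigcup_{s<t\in\rats_+} H_{s,t},\qquad H_{s,t}:=\left\{Y_s<Z_s,\ Y_t>Z_t,\ Y_u\ne Z_u \textrm{ for every }u\in(s,t)\right\}.
\end{equation*}
Each $H_{s,t}$ has probability zero by the almost-continuity of $X$ applied to the independent copies $Y,Z$ at the deterministic times $s<t$, so the union has probability zero by countable additivity.

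To verify the inclusion, suppose $Y_{t^{*}-}<Z_{t^{*}}<Y_{t^{*}}$ at some time $t^{*}$. Then $t^{*}$ is a jump time of $Y$; since $Y$ is continuous in probability and $Z$ is an independent cadlag process, Corollary \ref{cor:no simultaneous jumps} gives $Z_{t^{*}-}=Z_{t^{*}}$ almost surely at such a time. Hence $Y_{t^{*}-}<Z_{t^{*}-}$, so by convergence of left limits I can pick a rational $s<t^{*}$ with $Y_u<Z_u$ on $[s,t^{*})$; symmetrically $Y_{t^{*}}>Z_{t^{*}}$ together with right-continuity yields a rational $t>t^{*}$ with $Y_u>Z_u$ on $[t^{*},t]$. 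On $(s,t)$ the difference $Y_u-Z_u$ is therefore nonzero throughout, $Y_s<Z_s$ and $Y_t>Z_t$, so $H_{s,t}$ occurs.

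The main obstacle will be precisely this reduction to a rational pair $(s,t)$: the almost-continuity condition is only asserted for deterministic $s<t$, while the crossing time $t^{*}$ is a random, generally irrational stopping time. Getting around this requires using the cadlag regularity of the paths (together with Corollary \ref{cor:no simultaneous jumps} to rule out simultaneous jumps of $Y$ and $Z$) to sandwich $t^{*}$ between rationals on which the sign of $Y-Z$ is constant. Once this sandwiching is done, the remainder of the argument is a routine assembly of the lemmas already available.
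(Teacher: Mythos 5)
Your proposal is correct and takes essentially the same route as the paper's proof: reduce via Lemma \ref{lemma:never crosses MSupp cond}, use Corollary \ref{cor:no simultaneous jumps} to get continuity of $Z$ at the crossing time, sandwich that time between rationals so that almost-continuity at deterministic times applies, and obtain the downward-jump half by running the argument on $-X$. Your explicit countable union over rational pairs merely spells out the measure-theoretic step the paper leaves implicit.
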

\begin{proof}
Let $Y$ and $Z$ be independent cadlag processes with the same distribution as $X$. By Corollary \ref{cor:no simultaneous jumps}, the jump times of $Y$ and $Z$ are disjoint sets (restricting to a set of probability one).

Now suppose that, with positive probability, $Y_{t-}<Z_t<Y_t$ for some $t$. Then by the continuity of $Z$ at $t$ and the right-continuity of $Y$, there exist times $u,v\in\rats_+$ such that $u<t<v$ and $Y_u<Z_u$, $Y_v>Z_v$ and $Y_s\not=Z_s$ for every $s\in(u,v)$. As this contradicts Definition \ref{defn:acd} of almost-continuity, we see that the second statement of Lemma \ref{lemma:never crosses MSupp cond} is true.
Therefore, by Lemma \ref{lemma:never crosses MSupp cond}, the set
\begin{equation*}
\left\{(t,x)\in\halfplane:X_{t-}<x<X_t\right\}
\end{equation*}
is almost-surely disjoint from $\msupport{X}$. Similarly, applying the same argument to $-X$ gives the result.
\end{proof}

We still need to show that Lemma \ref{lemma:mgale AC iff never jumps past msupp} is true in the opposite direction, for which we need the following result. This is effectively saying that the process $Y$ cannot approach $Z$ from below without either touching or jumping past it.
Note that it also says that if the processes are adapted, then the stopping time $T$ is previsible on the set $Y_T\not=Z_T$.

\begin{lemma}\label{lemma:mgale never crosses msupp then ac:increasing times}
Let $X$ be a cadlag real valued process which is continuous in probability, and such that the set
\begin{equation*}
\left\{(t,x)\in\halfplane:X_{t-}<x<X_t\textrm{ or }X_t<x<X_{t-}\right\}
\end{equation*}
is disjoint from $\msupport{X}$ with probability one.

Also, let $Y$ and $Z$ be independent cadlag processes each with the same distribution as $X$.
For any $s\in\reals_+$ let $T$ be the random time
\begin{equation*}
T = \left\{
\begin{array}{ll}
\inf\{t\in\reals_+:t\ge s, Y_t\ge Z_t\},& \textrm{if }Y_s<Z_s,\\
\infty,&\textrm{otherwise},
\end{array}
\right.
\end{equation*}
and $(T_n)_{n\in\nat}$ be the random times
\begin{equation*}
T_n = \left\{
\begin{array}{ll}
\inf\{t\in\reals_+:t\ge s, Y_t+1/n\ge Z_t\},& \textrm{if }Y_s<Z_s,\\
\infty,&\textrm{otherwise}.
\end{array}
\right.
\end{equation*}
Then $T_n\uparrow T$ as $n\rightarrow\infty$ (a.s.). Also, $T_n<T$ whenever $T<\infty$ and $Y_T\not=Z_T$ (a.s.).
\end{lemma}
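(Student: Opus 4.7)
My plan is to analyse the limit $T_\infty := \lim_n T_n$ and reduce the two claims to (i) $T_\infty = T$ a.s.\ and (ii) $T_n < T$ on $\{T < \infty,\ Y_T \neq Z_T\}$ for every $n$. Since the condition $Y_t + 1/n \ge Z_t$ tightens as $n$ grows, the sequence $(T_n)$ is non-decreasing and each $T_n \le T$. Right-continuity of $Y$ and $Z$ upgrades the defining infimum to the attained inequality $Y_{T_n} + 1/n \ge Z_{T_n}$ whenever $T_n < \infty$, and $T_\infty \le T$.

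For (ii), right-continuity gives $Y_T \ge Z_T$, so $Y_T \neq Z_T$ forces $Y_T > Z_T$. By Corollary \ref{cor:no simultaneous jumps}, at most one of $Y$ or $Z$ jumps at $T$; if neither did, the left-continuous limit of $Y_t < Z_t$ on $[s,T)$ would give $Y_T \le Z_T$, a contradiction. If $Y$ jumps at $T$, then Lemma \ref{lemma:never crosses MSupp cond} (together with its symmetric reformulations obtained by replacing $X$ by $-X$ and interchanging the roles of $Y,Z$) forbids the strict crossing $Y_{T-} < Z_T < Y_T$, forcing $Y_{T-} = Z_T$. Symmetrically $Y_T = Z_{T-}$ if $Z$ jumps. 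In either jump case $Y_{T-} = Z_{T-}$ because the non-jumping process is continuous at $T$, so $Z_t - Y_t \to 0$ as $t \uparrow T$, and for each fixed $n$ there are times $t$ arbitrarily close to $T$ from below with $Y_t + 1/n > Z_t$, giving $T_n < T$.

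For (i), I would argue by contradiction, assuming $\Prob{T_\infty < T} > 0$. On $\{T_\infty < T\}$ we have $Y_{T_\infty} < Z_{T_\infty}$, while $0 < Z_{T_n} - Y_{T_n} \le 1/n$ for each $n$. If $T_n$ were to equal $T_\infty$ for all sufficiently large $n$, the limit $Y_{T_\infty} \ge Z_{T_\infty}$ would contradict the strict inequality; hence $T_n \uparrow T_\infty$ strictly from below, and taking left limits yields $Y_{T_\infty-} = Z_{T_\infty-}$. Case analysis with Corollary \ref{cor:no simultaneous jumps} then reduces to two residual scenarios: either $Y$ jumps down at $T_\infty$ from $Y_{T_\infty-} = Z_{T_\infty}$ to $Y_{T_\infty}$ (with $Z$ continuous), or $Z$ jumps up at $T_\infty$ from $Z_{T_\infty-} = Y_{T_\infty}$ to $Z_{T_\infty}$ (with $Y$ continuous).

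The hard part will be ruling out these two residual scenarios. In each, the jumping process traverses the open interval $(Y_{T_\infty}, Z_{T_\infty})$, which by the hypothesis should contain no point of $\support{X_{T_\infty}}$, even though both endpoints lie in that support. My plan is to combine Lemma \ref{lemma:process lies in msupp} (which places $Y_t$ in $\support{X_t}$ for $t$ just below $T_\infty$, with values converging to $Z_{T_\infty}$ from inside the jump interval in the $Y$-jumping case; symmetric for the $Z$-jumping case) with Lemma \ref{lemma:expression for msupp} and the observation that continuity in probability makes the functions $t \mapsto \E{\min((X_t-a)_+,(b-X_t)_+)}$ continuous, so that the sets $S_{a,b}$ are closed. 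The idea is then to pick rationals $a<b$ inside the jump interval for which $(a,b)\cap \support{X_t} \neq \emptyset$ along a sequence $t \uparrow T_\infty$ and conclude $(a,b) \cap \support{X_{T_\infty}} \neq \emptyset$, contradicting the hypothesis. The subtlety I expect to encounter is that weak convergence of the marginals does not by itself prevent $\mu_t((a,b))$ from vanishing in the limit, so a quantitative version of the transfer — perhaps leveraging the independence of $Y$ and $Z$ via Lemma \ref{lemma:simple expectation at independent time} — will probably be required to close the argument.
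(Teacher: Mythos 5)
Your reduction is sound as far as it goes: $(T_n)$ is increasing with $T_n\le T$, the attained inequalities $Y_{T_n}+1/n\ge Z_{T_n}$ hold by right-continuity, part (ii) follows correctly from Corollary \ref{cor:no simultaneous jumps} and Lemma \ref{lemma:never crosses MSupp cond} (your route to $Y_{T-}=Z_{T-}$ is if anything cleaner than the paper's), and your identification of the two residual scenarios for (i) — exactly one of $Y,Z$ jumps at $S:=\lim_n T_n$ across the interval $(Y_S,Z_S)$ with $Y_{S-}=Z_{S-}$ — matches the paper. But the heart of the lemma is ruling out those residual scenarios, and there you have only a sketch that, as written, cannot be closed. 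The obstruction is not merely the "subtlety" you flag about $\mu_t((a,b))$ vanishing in the limit; it is that your only witnesses for $(a,b)\cap\support{X_t}\ne\emptyset$ are the points $Y_{t-}\in\support{X_t}$, and these converge to the \emph{endpoint} $Z_{S-}=Z_S$ of the jump interval. That endpoint is allowed to lie in $\support{X_S}$ (indeed $Z_S$ does lie there), so for any fixed rationals $a<b$ strictly inside the interval the witnesses eventually leave $(a,b)$, and taking $b$ up to the endpoint yields no contradiction with the hypothesis. Closedness of $S_{a,b}$ also points the wrong way: it would transfer \emph{emptiness} of $(a,b)\cap\support{X_{t_k}}$ to the limit, not non-emptiness. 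So the picture "supports at times $t\uparrow S$ must leave a trace in $\support{X_S}$" is false in general, and no quantitative refinement of weak convergence alone will rescue it.

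The idea you are missing is to turn the bad event into the assertion that $Y$ jumps at a random time determined by $Z$ alone, and then invoke independence. Fix $m$ and set $f(t,x)=\Prob{x-1/m<X_t<x}$. On the event that $Y$ jumps down by at least $1/m$ at $S$ with $T_n<S$ for all $n$, one has $Y_{S-}=Z_{S-}$ and, by the hypothesis, $(Y_S,Y_{S-})\supseteq(Y_{S-}-1/m,Y_{S-})$ is disjoint from $\support{X_S}$, so $f(S,Z_{S-})=0$; whereas for $t$ in some interval $[u,S)$ with $u\in\rats_+$ the point $Y_{t-}$ lies in $(Z_{t-}-1/m,Z_{t-})\cap\support{X_t}$ by Lemma \ref{lemma:uniqueness:left lim of x is in msupp}, so $f(t,Z_{t-})>0$ there. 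Hence $S$ coincides with the debut $R^u=\inf\{t\ge u:f(t,Z_{t-})=0\}$, which is a $Z$-measurable random time independent of $Y$; Lemma \ref{lemma:simple expectation at independent time} (as in Corollary \ref{cor:no simultaneous jumps}) then gives $\Prob{R^u<\infty,\ Y_{R^u-}\ne Y_{R^u}}=0$, so the bad event is null. The symmetric argument excludes an upward jump of $Z$ at $S$, and combined with $Y_{S-}\ge Z_{S-}$ this yields $Y_S\ge Z_S$, i.e.\ $S=T$. Your instinct to "leverage independence via Lemma \ref{lemma:simple expectation at independent time}" was pointing at the right tool, but without this construction of the $Z$-measurable time $R^u$ the proof of part (i) is incomplete.
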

\begin{proof}
First, it is clear from the definitions that $T_n\le T$ for each $n$ and that $T_n$ is an increasing sequence. So we can define a random time $S$ by
\begin{equation*}
S = \lim_{n\rightarrow\infty}T_n.
\end{equation*}
We need to show that $S=T$, for which it is enough to prove $Y_S\ge Z_S$ whenever $S<\infty$.
We start by showing that $Y$ cannot have a negative jump at time $S$. Choose any $m\in\nat$ and let $A$ be the set
\begin{equation*}
A = \left\{\forall n\in\nat,\ T_n<S<\infty\textrm{ and } Y_S\le Y_{S-}-1/m\right\}.
\end{equation*}
Note that in $A$ we necessarily have $$|Z_{S-}-Y_{S-}|=\lim_{n\rightarrow\infty}|Z_{T_n}-Y_{T_n}|\le\lim_{n\rightarrow\infty}1/n=0,$$ so $Y_{S-}=Z_{S-}$.
Define the function
\begin{eqnalign*}
&&f:\halfplane\rightarrow\reals,\\
&&f(t,x) = \Prob{x-1/m<X_t<x},
\end{eqnalign*}
so $f(t,x)=0$ if and only if $(x-1/m,x)\cap\support{X_t}=\emptyset$.

The conditions of the lemma say that that $Y$ cannot jump past any points of the marginal support of $X$. However, on the set $A$, we have $(Y_{S-}-1/m,Y_{S-})\subseteq(Y_S,Y_{S-})$, so
\begin{equation*}
f(S,Z_{S-})=f(S,Y_{S-})=0\textrm{ (a.s.)}.
\end{equation*}
Restricting to $A$ we have $Y_{S-}=Z_{S-}$ and $Y_{t-}<Z_{t-}$ for $s< t<S$. So with probability one there exists a $u\in\rats_+$ with $u<S$ and such that
\begin{equation*}
Z_{t-}>Y_{t-}>Z_{t-}-1/m
\end{equation*}
for every $t\in[u,S)$.
As the paths of $Y_-$ lie in the marginal support of $X$ (by Lemma \ref{lemma:uniqueness:left lim of x is in msupp}) this implies that, restricting to $A$, $f(t,Z_{t-})>0$ for every $t\in[u,S)$ (almost surely).

So, if for every $u\in\rats_+$, we define the random time
\begin{equation*}
R^u=\inf\left\{t\in\reals_+:t\ge u, f(t,Z_{t-})=0\right\}
\end{equation*}
then $R^u=S$ for some $u\in\rats_+$. Note that the Debut Theorem (\citep{HeWangYan} Theorem 4.2) says that $R^u$ is $Z$-measurable.
Therefore,
\begin{equation*}
\Prob{A}\le\sum_{u\in\rats_+}\Prob{A\cap\{S=R^u\}}
\le\sum_{u\in\rats_+}\Prob{R^u<\infty,\ Y_{R^u-}\not=Y_{R^u}}.
\end{equation*}
As the random times $R^u$ are $Z$-measurable, they are independent of $Y$. So Lemma \ref{lemma:simple expectation at independent time} with $A_t=1_{\{Y_t\not=Y_{t-}\}}$ gives
\begin{equation*}
\Prob{R^u<\infty,\ Y_{R^u-}\not=Y_{R^u}}=0.
\end{equation*}
So $\Prob{A}=0$. More explicitly,
\begin{equation*}
\Prob{\forall n\in\nat,\ T_n<S<\infty\textrm{ and } Y_{S}\le Y_{S-}-1/m}=0
\end{equation*}
Letting $m$ go to infinity tells us that
\begin{equation*}
(\forall n\in\nat,\ T_n<S<\infty)\Rightarrow Y_S\ge Y_{S-}\textrm{ (a.s.)}.
\end{equation*}
Similarly, replacing $Y$ by $-Z$ and $Z$ by $-Y$ in the above argument gives
\begin{equation*}
(\forall n\in\nat,\ T_n<S<\infty)\Rightarrow Z_S\le Z_{S-}\textrm{ (a.s.)}.
\end{equation*}
However, we have $Y_{T_n}+1/n\ge Z_{T_n}$. Therefore, if $T_n<S<\infty$ for every $n$ then $Y_{S-}\ge Z_{S-}$. So, we get
\begin{equation*}
(\forall n\in\nat,\ T_n<S<\infty)\Rightarrow Y_S\ge Z_S\textrm{ (a.s.)}.
\end{equation*}
On the other hand, if $T_n=S<\infty$ for any $n$, then $$Y_S=Y_{T_m}\ge Z_{T_m}-1/m=Z_S-1/m$$ for every $m\ge n$, and therefore $Y_S\ge Z_S$.
We have shown that $Y_S\ge Z_S$ whenever $S<\infty$. So $T=S$ and $T_n\rightarrow T$.

Finally, suppose that $T_n=T<\infty$ for large $n$. Then, $Y_T\ge Z_T$ and $Y_{T-}\le Z_{T-}-1/n$.
Corollary \ref{cor:no simultaneous jumps} says that $Y_{T-}=Y_T$ or $Z_{T-}=Z_T$, so one of the following inequalities must be true,
\begin{equation*}
Z_{T}\le Y_T=Y_{T-}<Z_{T-}{\rm\ or\ }Y_{T-}<Z_{T-}=Z_T\le Y_T.
\end{equation*}
However Lemma \ref{lemma:never crosses MSupp cond} says that we cannot have $Y_{T-}<Z_T<Y_T$. Similarly, by replacing $Y$ with $-Z$ and $Z$ with $-Y$ then we cannot have $Z_T<Y_T<Z_{T-}$.
So, we must have $Y_T=Z_T$.
Therefore, if $Y_T\not=Z_T$ then $T_n<T$ for every $n$.
\end{proof}

Finally, we use the above result to prove Lemma \ref{lemma:mgale AC iff never jumps past msupp}.

\begin{proof}[Proof of Lemma \ref{lemma:mgale AC iff never jumps past msupp}]
First, if $X$ is almost-continuous then the result follows from Corollary \ref{cor:AC implies never crosses MSupp}. It only remains to show the converse.

So suppose that the set given by equation (\ref{eqn:lemma:mgale AC iff never jumps past msupp:1}) is almost surely disjoint from $\msupport{X}$.
Let $Y$ and $Z$ be independent cadlag processes each with the same distribution as $X$, and pick any $s\in\reals_+$.
Also, let $T$ and $(T_n)_{n\in\nat}$ be the stopping times defined by Lemma \ref{lemma:mgale never crosses msupp then ac:increasing times}. Picking any $t>s$ then the martingale property gives,
\begin{equation*}
\E{1_{\{T_n\le t\}}(Y_{t\wedge T}-Z_{t\wedge T})}=\E{1_{\{T_n\le t\}}(Y_{T_n}-Z_{T_n})}.
\end{equation*}
Noting that $Y_{T_n}<Z_{T_n}$ whenever $T_n<T$, and, by Lemma \ref{lemma:mgale never crosses msupp then ac:increasing times}, $Y_{T_n}=Z_{T_n}$ whenever $T_n=T<\infty$, we get
\begin{equation*}
\E{1_{\{T\le t\}}(Y_T-Z_T)}=\lim_{n\rightarrow\infty}\E{1_{\{T_n\le t\}}(Y_{T_n}-Z_{T_n})}\le 0.
\end{equation*}
As $Y_T\ge Z_T$ this shows that $Y_T=Z_T$ whenever $T\le t$. However, if $Y_s<Z_s$ and $Y_t>Z_t$ then $T<t$, so
\begin{equation*}
\Prob{Y_s<Z_s, Y_t>Z_t{\rm\ and\ }Y_u\not= Z_u\textrm{ for every }u\in(s,t)}=0.\qedhere
\end{equation*}
\end{proof}

\bibliography{expectations.bbl}
\bibliographystyle{plain}

\end{document}